\newcommand{\mf}{\mathfrak}
\newcommand{\mc}{\mathcal}
\newcommand{\RR}{\mathbb{R}}
\newcommand{\ZZ}{\mathbb{Z}}
\newcounter{r}
\newcounter{s}
\newcommand\sTableau[1]{
        \foreach \x [count = \c from 1] in {#1} {
		\foreach \y [count = \d from 1] in \x{
			\node at (\d-.5,\c-.5) {\tiny$\y$}; 
			\draw (\d,\c) to (\d,\c-1);
			{\ifnum\d=1
				\draw (0,\c) to (0,\c-1);
				\fi}
			\setcounter{r}{\d}
		}
		{\ifnum\c=1
			\draw (0,0)--(\value{r},0);
			\fi}
		\draw(0,\c) to (\value{r},\c);
		\setcounter{s}{\c}}}
\numberwithin{equation}{section}
\newtheorem{theorem}{Theorem}[section]
\newtheorem{corollary}[theorem]{Corollary}
\newtheorem{conjecture}[theorem]{Conjecture}
\newtheorem{proposition}[theorem]{Proposition}
\newtheorem{lemma}[theorem]{Lemma}
\newtheorem{example}[theorem]{Example}
\newtheorem*{remark}{Remark}
\title{A curiously slowly mixing Markov chain}
\author{
\begin{tabular}{c} Persi Diaconis\\ \small Department of Statistics \\ [-3pt] \small Department of Mathematics \\ [-3pt]\small Stanford University\end{tabular}
\begin{tabular}{c} Andrew Lin\\ \small Department of Mathematics \\ [-3pt]\small Stanford University\end{tabular}
\begin{tabular}{c} Arun Ram\\ \small Department of Mathematics \\ [-3pt]\small University of Melbourne\end{tabular}
}
\date{}
\begin{document} 

\maketitle

\begin{abstract}
We study a Markov chain with very different mixing rates depending on how mixing is measured. The chain is the ``Burnside process on the hypercube $C_2^n$.'' Started at the all-zeros state, it mixes in a bounded number of steps, no matter how large $n$ is, in $\ell^1$ and in $\ell^2$. And started at general $x$, it mixes in at most $\log n$ steps in $\ell^1$. But, in $\ell^2$, it takes $\frac{n}{\log n}$ steps for most starting $x$. The $\ell^2$ mixing results follow from an explicit diagonalization of the Markov chain into binomial-coefficient-valued eigenvectors.
\end{abstract}

\tableofcontents

\section{Introduction}

A mainstay of careful analysis of the mixing rates of Markov chains is
\[
    \text{``bound }\ell^1 \text{ by }\ell^2 \text{ and use eigenvalues.''}
\]
While this often works to give sharp rates of convergence in $\ell^1$, even enabling proof of cutoff (a review is found in \cref{backgroundreviewsection}), it can be off if the mixing rates in $\ell^1$ and $\ell^2$ are of different orders. The present paper studies a natural basic example, the Burnside process on $C_2^n$, where we find the mixing rates in $\ell^1$ and $\ell^2$ to be exponentially different.

We begin by describing a general version of the Burnside process. Let $\mf{X}$ be a finite set and $G$ a finite group acting on $\mf{X}$. This group action splits $\mf{X}$ into orbits
\[
    \mf{X} = \mc{O}_1 \cup \mc{O}_2 \cup \cdots \cup \mc{O}_Z, \quad \text{where }Z\text{ is the number of orbits.}
\]
We also use the notation $\mc{O}_x$ for the orbit containing $x$. The Burnside process gives a method of choosing an \textit{orbit} uniformly at random. Examples reviewed in \cref{backgroundreviewsection} show that this is an extremely efficient way to generate random integer partitions, P\'olya trees, and many other objects of ``P\'olya theory.'' It proceeds by a Markov chain on $\mf{X}$ run as follows:
\begin{itemize}
    \item From $x \in \mf{X}$, choose $s$ uniformly from the set $G_x = \{s: x^s = x\}$.
    \item From $s \in G$, choose $y$ uniformly from the set $\mf{X}_s = \{y: y^s = y\}$.
\end{itemize}
The chance of moving from $x$ to $y$ in one step of the chain is 
\begin{equation}\label{burnsidegenerictransition}
    K(x, y) = \frac{1}{|G_x|} \sum_{s \in G_x \cap G_y} \frac{1}{|\mf{X}_s|}.
\end{equation}
As discussed in \cref{backgroundreviewsection}, this is an ergodic, reversible Markov chain on $\mf{X}$ with stationary distribution
\[
    \pi(x) = \frac{1}{Z|\mc{O}_x|}.
\]
Thus, running the chain and simply recording the current orbit gives a Markov chain on orbits with a uniform stationary distribution. 

While experiments show extremely rapid mixing of the Burnside process, this fact has been hard to prove in most settings. A first example, the focus of the present paper, takes $\mf{X} = C_2^n$, the set of binary $n$-tuples, and $G = S_n$, the symmetric group acting by permuting coordinates. Then letting $|x|$ denote the number of ones in $x \in C_2^n$, the orbits can be enumerated as
\begin{equation}\label{binaryburnsideorbits}
    \mc{O}_i = \{x: |x| = i\} \quad \text{for }0 \le i \le n.
\end{equation}
For this example, the two steps of the Burnside process are easy to carry out:
\begin{itemize}
    \item Given $x \in C_2^n$, $G_x$ is the set of permutations which permute the zeros and ones in $x$ among themselves. Thus $G_x \cong S_i \times S_{n-i}$ with $i = |x|$, and it is easy to choose $s \in G_x$ uniformly.
    \item Given $s \in S_n$, $\mf{X}_s$ is the set of binary $n$-tuples fixed by the permutation of coordinates. To pick an element of $\mf{X}_s$ at random, we may write $s$ as a product of disjoint cycles, label each cycle with independent fair 0/1 coin flips, and install those zeros and ones as the cycles indicate. It is thus easy to choose $y \in \mf{X}_s$ uniformly.
\end{itemize}
A closed form expression for $K(x, y)$ in the binary case is in \cref{closedformk}. 

For the binary case, a first analysis by Jerrum \cite{jerrum} showed that order $\sqrt{n}$ steps suffice for $\ell^1$ mixing. This was improved by Aldous \cite{aldousfill}, who showed that $\log n$ steps suffice. More precisely, for any $x \in C_2^n$, the \textit{total variation distance} satisfies
\begin{equation}\label{aldousbound}
    ||K_x^\ell - \pi||_{\text{TV}} \le n\left(\frac{1}{2}\right)^\ell \le \left(\frac{1}{2}\right)^c \text{ for } \ell = \log_2n + c.
\end{equation}
Of course, the starting state can matter. In \cite{diaconiszhong}, it is shown that starting at the all-zeros state $\underline{0}$, just a bounded number of steps suffice:
\begin{equation}\label{diaconiszhongbound}
    \frac{1}{4}\left(\frac{1}{4}\right)^\ell \le ||K_{\underline{0}}^\ell - \pi||_{\text{TV}} \le 4\left(\frac{1}{4}\right)^\ell.
\end{equation}
This result is proved by the ``bound $\ell^1$ by $\ell^2$'' approach. Here, the $\ell^2$ or \textit{chi-square distance} after $\ell$ steps is
\[
    \chi_x^2(\ell) = \sum_y \frac{(K^\ell(x, y) - \pi(y))^2}{\pi(y)} = \left|\left|\frac{K_x^\ell}{\pi} - 1\right|\right|_2^2.
\]  
As illustrated in \cref{diaconiszhongbound}, often the bound
\[
    4||K_x^\ell - \pi||_{\text{TV}}^2 \le \chi_x^2(\ell)
\]
is fairly sharp (\cref{backgroundreviewsection} provides bounds in the other direction). This requires the $\ell^1$ and $\ell^2$ mixing times to be of the same order.

Our first main result shows that for the binary Burnside process, the $\ell^1$ and $\ell^2$ mixing times can have very different orders. For most starting states, order $\frac{n}{\log n}$ steps are required to make $\chi_x^2(\ell)$ small, which is exponentially slower than \cref{aldousbound}. To state the result, define the average chi-square distance as
\[
    \chi_{\text{avg}}^2(\ell) = \sum_x \pi(x) \chi_x^2(\ell).
\]  
\begin{theorem}\label{chisquare}
For the binary Burnside process on $C_2^n$, we have the following:
\begin{enumerate}
    \item $\displaystyle\chi_{\text{avg}}^2(\ell) = \sum_{k=1}^{\lfloor n/2 \rfloor} \binom{n}{2k} \left(\frac{1}{2^{4k}} \binom{2k}{k}^2\right)^{2\ell}$.
    \item Take any $\varepsilon > 0$. If $\ell \le (1-\varepsilon) \frac{\log 2}{2} \frac{n}{\log n}$, then $\chi_{\text{avg}}^2(\ell) \to \infty$ as $n \to \infty$. Meanwhile, if $\ell \ge (1+\varepsilon)\frac{\log 2}{2} \frac{n}{\log n}$, then $\chi_{\text{avg}}^2(\ell) \to 0$ as $n \to \infty$. Therefore, there is some sequence of starting states $x^{(n)} \in C_2^n$ such that $\chi_{x^{(n)}}^2(\ell) \to \infty$ even after $(1 - \varepsilon)\frac{\log 2}{2} \frac{n}{\log n}$ steps, while all starting states $x$ have $\chi_x^2(\ell) \to 0$ after just $(1 + \varepsilon)\frac{\log 2}{2} \frac{n}{\log n}$ steps. 
    \item Let $x^{(n)} \in C_2^n$ be any sequence of states such that $cn \le |x^{(n)}| \le (1 - c)n$ for some $c \in (0, 1)$ (where $|x|$ denotes the number of ones in $x$). Then $\chi_{x^{(n)}}^2(\ell) \to \infty$ for $\ell = \Theta_c\left(\frac{n}{\log n}\right)$ (that is, after order $\frac{n}{\log n}$ steps with constant depending only on $c$). 
    \item In particular, let $y^{(n)} \in C_2^n$ be the states with $\lfloor \frac{n}{2} \rfloor$ zeros followed by $\lceil \frac{n}{2} \rceil$ ones. Then $\chi_{y^{(n)}}^2(\ell) \to \infty$ for $\ell \le (1-\varepsilon) \frac{\log 2}{2} \frac{n}{\log n}$.
\end{enumerate}
\end{theorem}

In words, both the average chi-square distance and the chi-square distance started at the ``half-zeros, half-ones state'' $y^{(n)}$ have an $\ell^2$ cutoff at $\ell = \frac{\log 2}{2} \frac{n}{\log n}$. The proofs of these results rely on an explicit diagonalization of the chain, which we describe now.

\begin{theorem}\label{eigenvectors}
Let $K(x, y)$ be the transition matrix of the binary Burnside process on $C_2^n$. We have the following:
\begin{enumerate}
    \item The eigenvalues of $K$ are $0$ and
    \[
        \beta_k = \frac{1}{2^{4k}} \binom{2k}{k}^2, \quad 0 \le k \le \left\lfloor \frac{n}{2} \right\rfloor.
    \]
    \item The eigenvalue multiplicity of $0$ is $2^{n-1}$. The eigenvalue multiplicity of $\beta_k$ is $\binom{n}{2k}$ for all $0 \le k \le \lfloor \frac{n}{2} \rfloor$.
    \item For $0 \le k \le \lfloor\frac{n}{2}\rfloor$, a basis of eigenvectors for the $\beta_k$-eigenspace is 
    \[
        \left\{f_S(x) = (-1)^{|x_S|} \binom{2k}{|x_S|}: |S| \subset [n], |S| = 2k\right\},
    \]
    where $|x_S|$ denotes the number of ones of $x \in C_2^n$ among the coordinate set $S$.
\end{enumerate}
\end{theorem}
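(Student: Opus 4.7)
The strategy is to verify (3) first---that the $f_S$ are $\beta_k$-eigenvectors and linearly independent---and then derive (1) and (2) from a dimension count against the full $2^n$-dimensional space.

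For the eigenvector identity $Kf_S=\beta_k f_S$, write
\[
Kf_S(x)=\mathbb{E}_{s\in G_x}\mathbb{E}_{y\in\mf{X}_s}[f_S(y)].
\]
Decompose $s=(s_1,s_2)\in S_{|x|}\times S_{n-|x|}$ according to the ones and zeros of $x$, and split $S=A\sqcup B$ with $A=S\cap\{x=1\}$ of size $a=|x_S|$ and $B$ of size $b=2k-a$. Given $s$, each cycle receives an independent fair bit and $y$ is constant on cycles, so $|y_S|$ is a sum of cycle-weighted Bernoullis that cleanly splits over the two pieces. Applying the Fourier identity
\[
(-1)^m\binom{2k}{m}=\frac{1}{2\pi}\int_0^{2\pi}(-e^{-i\theta})^m(1+e^{i\theta})^{2k}\,d\theta
\]
to the inner expectation, the $y$-average factorizes as a product over cycles and the $s$-average decouples into independent $s_1$- and $s_2$-contributions. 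A classical fact---that for a uniform $\sigma\in S_m$ and $A\subset[m]$ of size $a$, the set partition of $A$ induced by the cycles of $\sigma$ is distributed as the cycle partition of a uniform element of $S_a$, independent of $m$---combined with the cycle-index exponential formula identifies the $s_1$-factor as $i^a e^{-ia\theta/2}P_a(\sin(\theta/2))$ where $P_a$ is the Legendre polynomial, and similarly for $s_2$ with $b$. Simplifying via $(1+e^{i\theta})^{2k}e^{-ik\theta}=4^k\cos^{2k}(\theta/2)$ and the change of variables $c=\sin(\theta/2)$ collapses everything to
\[
Kf_S(x)=\frac{(-1)^k 4^k}{\pi}\int_{-1}^1(1-c^2)^{k-1/2}P_a(c)P_b(c)\,dc.
\]

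The main technical obstacle is the evaluation of this Gegenbauer-weighted integral of two Legendre polynomials. The plan is to expand $P_a=C_a^{1/2}$ and $P_b=C_b^{1/2}$ in the ultraspherical basis $\{C_n^k\}_n$ using the standard Legendre-to-Gegenbauer connection coefficients, then invoke the Gegenbauer orthogonality relation $\int_{-1}^1 C_p^k C_q^k(1-c^2)^{k-1/2}dc=\delta_{pq}h_p^k$. Because $a+b=2k$ is fixed, only a single pair of terms in the double expansion survives the Kronecker delta, and after simplification the integral reduces to $\pi(-1)^{k+a}\binom{2k}{k}^2\binom{2k}{a}/2^{6k}$. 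Combining with the prefactor yields $Kf_S(x)=\beta_k(-1)^a\binom{2k}{a}=\beta_k f_S(x)$; direct computation at $k=0,1$ confirms the formula.

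For linear independence of $\{f_S:|S|=2k\}$, expand each $f_S$ in the multilinear monomial basis $\{\prod_{i\in T}x_i\}_{T\subset[n]}$ via M\"obius inversion. Because $f_S$ depends only on $x|_S$, its $\prod_{i\in T}x_i$-coefficient vanishes unless $T\subseteq S$; a short Vandermonde computation gives the coefficient of the top monomial $\prod_{i\in S}x_i$ as $\binom{4k}{2k}\neq 0$, so distinct $S$ of size $2k$ have distinct leading monomials and the $f_S$ are linearly independent. Since eigenvectors corresponding to distinct $\beta_k$ are automatically linearly independent, we accumulate $\sum_{k=0}^{\lfloor n/2\rfloor}\binom{n}{2k}=2^{n-1}$ linearly independent eigenvectors with nonzero eigenvalue; the remaining $2^n-2^{n-1}=2^{n-1}$ dimensions must form the kernel of $K$, yielding $0$ as an eigenvalue with multiplicity $2^{n-1}$ and completing (1) and (2).
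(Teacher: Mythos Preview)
Your approach is genuinely different from the paper's, and several pieces of it are nice. The paper verifies $Kf_S=\beta_kf_S$ by a two-step lumping: first it cites (from \cite{diaconiszhong}) that the orbit chain $K_m^{\text{lumped}}$ on $\{0,\dots,m\}$ has $T_m^m(i)=(-1)^i\binom{m}{i}$ as a $\beta_{m/2}$-eigenvector, so $g_m(x)=(-1)^{|x|}\binom{m}{|x|}$ is a $\beta_{m/2}$-eigenvector of $K_m$; then it invokes the coordinate-restriction property (the chain on $C_2^n$ restricted to any $m$ coordinates \emph{is} the chain on $C_2^m$) to lift $g_m$ to $f_S$. For linear independence the paper runs a somewhat intricate matrix argument culminating in a Vandermonde determinant, and it identifies the $0$-eigenspace as the set of functions odd under global bit-flip via $K(x,y)=K(x,\bar y)$. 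Your monomial-triangularity proof of linear independence --- observing that $f_S$ is supported on $\{\prod_{i\in T}x_i:T\subseteq S\}$ with top coefficient $\sum_j\binom{2k}{j}^2=\binom{4k}{2k}\ne0$ --- is cleaner than the paper's argument and would be a welcome simplification. Likewise, your cycle-partition-restriction ``classical fact'' is precisely the content of the paper's \cref{restricttocoordinates}, so that step is sound.

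There is, however, a real gap in your evaluation of
\[
I_{a,b}=\int_{-1}^1(1-c^2)^{k-1/2}P_a(c)P_b(c)\,dc,\qquad a+b=2k.
\]
Your claimed value is correct, but the mechanism you describe --- expand $P_a=C_a^{1/2}$ and $P_b=C_b^{1/2}$ in the $\{C_j^k\}$ basis and observe that ``only a single pair of terms in the double expansion survives the Kronecker delta'' --- is false whenever $\min(a,b)\ge2$. For instance at $k=2$, $a=b=2$ one has $P_2=\tfrac{1}{8}C_2^2-\tfrac14 C_0^2$, so the orthogonality leaves \emph{two} diagonal contributions, $\tfrac{1}{64}h_2^2+\tfrac{1}{16}h_0^2=\tfrac{15\pi}{512}+\tfrac{12\pi}{512}=\tfrac{27\pi}{512}$, which does match your formula $\pi\binom{4}{2}^2\binom{4}{2}/2^{12}$ but not via a single term. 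In general the expansion of $C_a^{1/2}$ in $\{C_j^k\}$ has $\lfloor a/2\rfloor+1$ nonzero terms, and the surviving diagonal sum runs over all $j\le\min(a,b)$ of the right parity. So as written, the reduction ``after simplification the integral reduces to $\pi(-1)^{k+a}\binom{2k}{k}^2\binom{2k}{a}/2^{6k}$'' hides a nontrivial hypergeometric summation that you have not carried out. You would need either to evaluate that sum of connection coefficients explicitly, or to find a different route to $I_{a,b}$ (e.g.\ via the generating function $\sum_{a+b=2k}I_{a,b}\,s^a t^b$, or by first expanding $(1-c^2)^{k-1/2}P_b(c)$ in Legendre polynomials and then using Legendre orthogonality against $P_a$, which \emph{does} pick out a single term). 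The paper sidesteps this analytic difficulty entirely by reducing to the already-diagonalized lumped chain.
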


\begin{remark}
Alas, these eigenvectors $f_S$ are not orthogonal. We describe formulas in \cref{eigenvectorsection} for the inner products $\langle f_S, f_T \rangle$ for any subsets $S, T$, but the usual route of expressing $\chi_x^2(\ell)$ requires an orthonormal eigenbasis. We perform this analysis, using Schur--Weyl duality, in a companion paper \cite{diaconislinram2}.
\end{remark}

We conclude with a summary of the rest of this paper. \cref{backgroundreviewsection} below gives background on some required analytic tools (\cref{analyticsubsection}), as well as some additional uses for the eigenvectors (\cref{eigenvectorsusesubsection}). It also gives a survey of examples where $\ell^1$ and $\ell^2$ rates are the same and different (\cref{l1vsl2subsection}) and a brief review of the Burnside process (\cref{burnsidesubsection}).

Properties of the transition matrix $K$ are developed in \cref{propertiessection}. We point to a curious, but important, feature: for any subset $S \subseteq [n]$, the chain on $C_2^n$ lumped to $S$ is \textit{precisely} the Burnside process on $C_2^{|S|}$. \cref{eigenvectors} is then proved in \cref{eigenvectorsection}, and \cref{chisquare} is proved in \cref{chisquareboundsection}.

Finally, \cref{relatedchainssection} compares some of the properties of the binary Burnside process, including eigenvalue multiplicities and duality, to other chains in which these properties manifest. It then shows that some of the magical properties of the binary Burnside chain also hold for the chain on $C_k^n$ for $k \ge 2$. 

\section*{Acknowledgements}

This paper is dedicated to our friend Svante Janson on the occasion of his 70th birthday. Over the years, we have tried (and failed) to interest him in mixing questions. We sure could use his help on this problem.

We also thank Evita Nestoridi, Allan Sly, Amol Aggarwal, Yuval Peres, Michael Howes, Jonathan Hermon, Lucas Teyssier, Laurent Bartholdi, Christoph Koutschan, Richard Stanley, and Sourav Chatterjee for helpful discussions and ideas.

P.D.\ was partially supported by the NSF under Grant No.\ DGE-1954042, and A.L.\ was partially supported by the NSF under Grant No.\ DGE-2146755.

\section{Background}\label{backgroundreviewsection}

This section contains needed background and a literature review. \cref{analyticsubsection} reviews the analytic background for bounding $\ell^1$ and $\ell^2$ distances using eigenvalues and eigenvectors, and \cref{eigenvectorsusesubsection} provides further applications of those eigenvectors in the case of the binary Burnside chain. \cref{l1vsl2subsection} gives examples of Markov chains where the $\ell^1$ and $\ell^2$ distances are well-understood enough to give useful comparisons. Finally, \cref{burnsidesubsection} discusses previous literature on the Burnside process.

\subsection{Analytic background}\label{analyticsubsection}

An exceptional text for mixing time results is the book by Levin and Peres \cite{levinperes}. Chapter 12 of their book contains the basics for bounding $\ell^1$ and $\ell^2$ distances using eigenvalues. The comprehensive text of Saloff-Coste \cite{saloffcoste} develops analytic tools more deeply.

Let $\mf{X}$ be a finite set and $K(x, y)$ a Markov transition matrix with state space $\mf{X}$ and stationary distribution $\pi(x)$. Throughout this section, assume that $(K, \pi)$ is ergodic and reversible. For any $1 \le p < \infty$, let $\ell^p(\pi) = \{f: \mf{X} \to \RR\}$ denote the function space with norm
\[
    ||f||_p^p = \sum_x |f(x)|^p \pi(x).
\]
Reversibility implies that $Kf(x) = \sum_y K(x, y) f(y)$ is self-adjoint as a map $\ell^2 \to \ell^2$; that is, for any functions $f, g$, we have $\langle Kf, g \rangle = \langle f, Kg \rangle$, where $\langle f, g \rangle = \sum_x f(x) g(x) \pi(x)$. Thus, the spectral theorem shows that $K$ has an orthonormal set of eigenvectors and corresponding eigenvalues $f_i, \beta_i$ with $Kf_i(x) = \beta_i f_i(x)$ for all $i$. As usual, we reorder so that $1 = \beta_0 > \beta_1 \ge \beta_2 \ge \cdots \ge \beta_{|\mf{X}| - 1} > -1$. Started at a state $x$, the $\ell^1$ or \textit{total variation} distance 
\[
    ||K_x^\ell - \pi||_{\text{TV}} = \frac{1}{2} \sum_y |K^\ell(x, y) - \pi(y)|
\]
and the $\ell^2$ or \textit{chi-square} distance 
\[
    \chi_x^2(\ell) = \sum_y \frac{(K^\ell(x, y) - \pi(y))^2}{\pi(y)} = \left|\left|\frac{K_x^\ell}{\pi} - 1\right|\right|_2^2
\]
can be bounded as 
\begin{equation}\label{l1vsl2bounds}
4||K_x^\ell - \pi||_{\text{TV}}^2 \le \chi_x^2(\ell) = \sum_{i=1}^{|\mf{X}| - 1} f_i^2(x) \beta_i^{2\ell} \le \frac{1}{\pi(x)} \beta_\ast^{2\ell},
\end{equation}
where $\beta_\ast = \max(|\beta_1|, |\beta_{|\mf{X}| - 1}|)$ is the second absolute eigenvalue.

If $f$ is an eigenfunction for $K$ with eigenvalue $\beta \ne 1$, 
\begin{align*}
    |\beta^\ell f(x)| = \left|K^\ell f(x)\right| &= \left|\sum_y K^\ell(x, y) f(y) \right| \\
    &= \left|\sum_y K^\ell(x, y) f(y) - \pi(y) f(y) \right| \\
    &\le \sum_y |K^\ell(x, y) - \pi(y)| f^\ast,
\end{align*}
where $f^\ast = \max_y |f(y)|$. Choosing $x^\ast$ so that $|f(x^\ast)| = f^\ast$ then yields
\[
    |\beta^\ell| \le \sum_y |K^\ell(x, y) - \pi(y)| = 2 ||K_{x^\ast}^\ell - \pi||_{\text{TV}}.
\]
Combining this with $\sum_{i=0}^{|\mf{X}| - 1} f_i^2(x) = \frac{1}{\pi(x)}$ and \cref{l1vsl2bounds} yields the following:

\begin{proposition}\label{twosidedl1l2bound}
Let $(K, \pi)$ be a reversible Markov chain with second absolute eigenvalue $\beta_\ast$, and let $f$ be an eigenvector for $\beta_\ast$. Suppose $x_\ast \in \mf{X}$ satisfies $|f(x^\ast)| = \max_x |f(x)|$. Then 
\[
    4||K_{x^\ast}^\ell - \pi||_{\text{TV}}^2 \le \chi_{x^\ast}^2(\ell) \le \frac{2}{\pi(x^\ast)} ||K_{x^\ast}^{2\ell} - \pi||_{\text{TV}}.
\]  
\end{proposition}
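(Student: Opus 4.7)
My plan is to establish the two inequalities separately. The lower bound $4\|K_{x^\ast}^\ell - \pi\|_{\text{TV}}^2 \le \chi_{x^\ast}^2(\ell)$ is already recorded as part of \eqref{l1vsl2bounds} and comes from a direct Cauchy--Schwarz computation: writing $2\|K_{x^\ast}^\ell - \pi\|_{\text{TV}} = \sum_y \pi(y)^{1/2}\cdot |K^\ell(x^\ast, y) - \pi(y)|/\pi(y)^{1/2}$ and applying Cauchy--Schwarz against $\sum_y \pi(y) = 1$ gives the inequality after squaring. So all the real content lies in the upper bound.

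For the upper bound, the key move I would make is to turn $\chi^2$ at time $\ell$ into a normalized return probability at time $2\ell$. Plugging detailed balance $\pi(y) K^\ell(y, x) = \pi(x) K^\ell(x, y)$ into $K^{2\ell}(x, x) = \sum_y K^\ell(x, y) K^\ell(y, x)$ yields
\[
\frac{K^{2\ell}(x, x)}{\pi(x)} = \sum_y \frac{K^\ell(x, y)^2}{\pi(y)} = 1 + \chi_x^2(\ell),
\]
where the second equality comes from expanding the square in the definition of $\chi_x^2(\ell)$ and using $\sum_y K^\ell(x, y) = 1$. Rearranging, $\pi(x) \chi_x^2(\ell) = K^{2\ell}(x, x) - \pi(x)$, which is manifestly nonnegative (and is really the eigenbasis identity $\chi_x^2(\ell) = \sum_{i \ge 1} f_i(x)^2 \beta_i^{2\ell}$ read from the other direction, since $\beta_i^{2\ell} \ge 0$).

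To finish, I would compare this return excess with total variation via $\|K_x^{2\ell} - \pi\|_{\text{TV}} = \max_A (K^{2\ell}(x, A) - \pi(A))$. Because $K^{2\ell}(x, x) - \pi(x) \ge 0$, the singleton $A = \{x\}$ is a valid test set, so $\|K_x^{2\ell} - \pi\|_{\text{TV}} \ge K^{2\ell}(x, x) - \pi(x) = \pi(x) \chi_x^2(\ell)$. Dividing by $\pi(x)$ and specializing $x = x^\ast$ then closes out the proposition.

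I do not anticipate a serious obstacle; in fact the choice of $x^\ast$ plays no role in the upper bound, which holds verbatim for every $x \in \mf{X}$. The reason for pairing both inequalities at $x^\ast$ is that the complementary estimate $|\beta_\ast^\ell| \le 2\|K_{x^\ast}^\ell - \pi\|_{\text{TV}}$ from the display immediately preceding the proposition is tight exactly at that point, so the whole package sandwiches $\chi^2$ between total variation and $\pi(x^\ast)^{-1}$ times total variation. The one conceptual move worth highlighting is the observation that $\chi^2$ at time $\ell$ is literally a return probability at time $2\ell$; that is what lets one avoid the extra factor of $2$ that would appear in the cruder chain $\chi_{x^\ast}^2(\ell) \le \beta_\ast^{2\ell}/\pi(x^\ast) \le 2\|K_{x^\ast}^{2\ell} - \pi\|_{\text{TV}}/\pi(x^\ast)$, and instead yield the clean factor of $1$ in the stated bound.
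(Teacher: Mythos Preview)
Your proof is correct. The lower bound is standard, and for the upper bound your reversibility computation $\chi_x^2(\ell) = K^{2\ell}(x,x)/\pi(x) - 1$ followed by testing total variation against the singleton $\{x\}$ is clean and valid.

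Your route differs from the paper's. The paper's sketch says to combine the eigenfunction bound $|\beta_\ast^\ell| \le 2\|K_{x^\ast}^\ell - \pi\|_{\text{TV}}$ (derived in the display just before the proposition) with the identity $\sum_i f_i(x)^2 = 1/\pi(x)$ and the eigenvalue expansion in \eqref{l1vsl2bounds}; chaining these literally gives $\chi_{x^\ast}^2(\ell) \le \beta_\ast^{2\ell}/\pi(x^\ast) \le \tfrac{2}{\pi(x^\ast)}\|K_{x^\ast}^{2\ell} - \pi\|_{\text{TV}}$, i.e.\ a factor of $2$ worse than what is stated. You correctly identify this in your final paragraph. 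Your return-probability argument sidesteps the eigenvalue detour entirely, recovers the constant $1$ as written, and (as you note) works for every $x$, not just $x^\ast$ --- the special choice of $x^\ast$ only matters for the companion lower bound $|\beta_\ast^\ell| \le 2\|K_{x^\ast}^\ell - \pi\|_{\text{TV}}$. The paper's approach, by contrast, makes the role of $x^\ast$ and the second eigenvalue more visible, which is useful conceptually even if it costs a constant.
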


This is one of several results showing roughly that if $||K^\ell - \pi||_{\text{TV}}$ is close to zero in order $\ell^\ast$ steps, then $\chi^2(\ell)$ is close to zero in order $\ell^\ast + \log \pi(x^\ast)$ steps. (See \cite[Chapter 12]{levinperes}.) Here is an example application to the binary Burnside process:

\begin{corollary}\label{l2byl1corollary}
For the binary Burnside process on $C_2^n$ and \textit{any} $x \in C_2^n$,
\[
    \chi_x^2(\ell) \le \frac{2}{\pi(x)} ||K_x^{2\ell} - \pi||_{\text{TV}}.
\]
\end{corollary}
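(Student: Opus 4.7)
The plan is to prove this by establishing and then combining two ingredients: a reversibility identity that expresses $\chi_x^2(\ell)$ in terms of the return probability $K^{2\ell}(x,x)$, and the fact that all eigenvalues of $K$ are nonnegative (so the return probability is always at least $\pi(x)$), which lets us bound $K^{2\ell}(x,x) - \pi(x)$ by the total variation distance using only the positive part.

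First, I would use reversibility $\pi(x) K^\ell(x,y) = \pi(y) K^\ell(y,x)$ to write
\[
    \sum_y \frac{K^\ell(x,y)^2}{\pi(y)} = \sum_y \frac{K^\ell(x,y) K^\ell(y,x)}{\pi(x)} = \frac{K^{2\ell}(x,x)}{\pi(x)}.
\]
Expanding the square in the definition of $\chi_x^2(\ell)$ and using $\sum_y K^\ell(x,y) = \sum_y \pi(y) = 1$ then gives the clean identity
\[
    \chi_x^2(\ell) = \frac{K^{2\ell}(x,x)}{\pi(x)} - 1 = \frac{K^{2\ell}(x,x) - \pi(x)}{\pi(x)}.
\]

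Next, I would note that \cref{eigenvectors} guarantees every eigenvalue of $K$ lies in $\{0\} \cup \{\beta_k\}_{k \ge 0} \subseteq [0,1]$, so the spectral expansion $K^{2\ell}(x,x) = \pi(x) \sum_i \beta_i^{2\ell} f_i(x)^2$ (with $f_0 \equiv 1$, $\beta_0 = 1$) immediately yields $K^{2\ell}(x,x) \ge \pi(x)$. In particular, $y = x$ belongs to the set $\{y : K^{2\ell}(x,y) \ge \pi(y)\}$ on which the positive-part formula
\[
    \|K_x^{2\ell} - \pi\|_{\text{TV}} = \sum_{y:\, K^{2\ell}(x,y) \ge \pi(y)} (K^{2\ell}(x,y) - \pi(y))
\]
is supported. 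Therefore the single term $y=x$ gives $K^{2\ell}(x,x) - \pi(x) \le \|K_x^{2\ell} - \pi\|_{\text{TV}}$, and dividing by $\pi(x)$ and combining with the identity above finishes the proof.

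The only potentially subtle step is the nonnegativity of the spectrum, which is what lets us pay the cost only on one side of the TV distance (otherwise the naive bound $K^{2\ell}(x,x) - \pi(x) \le 2\|K_x^{2\ell}-\pi\|_{\text{TV}}$ loses a factor of $2$); this is handed to us by \cref{eigenvectors}. Everything else is a standard reversibility computation.
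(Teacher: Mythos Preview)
Your proof is correct and takes a genuinely different route from the paper's. The paper derives the corollary from \cref{twosidedl1l2bound}, which requires exhibiting, for each state $x$, an eigenfunction for the second absolute eigenvalue $\beta_1=\tfrac14$ that attains its maximum modulus at $x$; this is done by a short case analysis using the explicit eigenfunctions $f_{\{i,j\}}$ from \cref{eigenvectors} (one $f_{\{i,j\}}$ works when $x$ has both a $0$ and a $1$, and the sum $\sum_{i<j} f_{\{i,j\}}$ handles $x=\underline{0},\underline{1}$). You instead use the reversibility identity $\chi_x^2(\ell)=K^{2\ell}(x,x)/\pi(x)-1$ and bound the single term $K^{2\ell}(x,x)-\pi(x)$ by the positive-part formula for total variation. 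This is shorter, avoids any case analysis, and does not actually need the explicit eigenvectors; the paper's route, by contrast, fits its running theme of ``using the eigenvectors for something.''

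One small remark: your appeal to \cref{eigenvectors} for nonnegativity of the spectrum is unnecessary. The inequality $K^{2\ell}(x,x)\ge\pi(x)$ follows for \emph{any} reversible chain from the spectral expansion $K^{2\ell}(x,x)=\pi(x)\sum_i\beta_i^{2\ell}f_i(x)^2$, since $\beta_i^{2\ell}\ge 0$ regardless of the sign of $\beta_i$ (the exponent $2\ell$ is even). So your argument in fact proves the inequality $\chi_x^2(\ell)\le \pi(x)^{-1}\|K_x^{2\ell}-\pi\|_{\mathrm{TV}}$ for every state $x$ of every reversible chain, which is strictly stronger than \cref{twosidedl1l2bound} (which only asserts it at a maximizer $x^\ast$).
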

\begin{proof}
By \cref{eigenvectors}, the second absolute eigenvalue is $\beta_1 = \frac{1}{4}$. Furthermore, for each $1 \le i < j \le n$, we have the corresponding $\beta_1$-eigenfunction 
\begin{equation}\label{beta1eigenfunction}
    f_{\{i,j\}}(x) = (-1)^{x_i + x_j} \binom{2}{x_i + x_j} = \begin{cases} 1 & \text{ if }(x_i, x_j) = (0, 0) \text{ or }(1, 1), \\ -2 & \text{ if }(x_i, x_j) = (0, 1) \text{ or }(1, 0). \end{cases} \\
\end{equation}
Thus if $x$ is not the all-zeros $\underline{0}$ or all-ones $\underline{1}$ state, there is at least one choice of $i, j$ with $|f_{\{i,j\}}(x)| = 2$ and the result follows. And if $x = \underline{0}$ or $\underline{1}$, then the sum of all $\binom{n}{2}$ $f_{\{i,j\}}$s is a $\beta_1$-eigenfunction achieving its maximum magnitude at $x = \underline{0}$ and $\underline{1}$, so again we may apply the previous result.
\end{proof}

\begin{example}\label{l2byl1example}
Using \cref{aldousbound} to bound the right-hand side of the corollary, and noting that for any $x$, 
\[
    \frac{1}{\pi(x)} = (n+1) \binom{n}{|x|} \le 2n^{1/2} 2^n,
\]  
we have that for any starting state,
\[
    \chi_x^2(\ell) \le 4n^{3/2} 2^n \left(\frac{1}{2}\right)^\ell.
\]
This shows that $n + c \log n$ steps suffice for $\ell^2$ mixing for every starting state $x$, and \cref{chisquare} shows that this is indeed close to best possible. 

On the other hand, the bound is sometimes much tighter. If $|x|$ or $n - |x|$ is of constant size (meaning that all but a constant number of coordinates in the starting state are all $0$s or all $1$s), then the corollary states that $\chi_x^2(\ell)$ is at most $\left(\frac{1}{2}\right)^\ell$ times a polynomial in $n$, and therefore just $c \log n$ steps will suffice for such states.
\end{example}

\begin{remark}
It is of course important to point out that there are many other approaches to proving rates of convergence. For example, Aldous' bound (\cref{aldousbound}) is proved by coupling, and strong stationary times have also proved useful; see \cite[Chapters 5 and 6]{levinperes} for definitions and references. The direct use of Cauchy-Schwarz to bound $\ell^1$ by $\ell^2$ can also be refined, as is done in \cite{teyssierlimitprofile} and \cite{nestoridilimitprofile} in the analysis of limit profiles of reversible Markov chains. Other tools include functional inequalities such as the Nash \cite{nashineq}, Harnack \cite{harnack}, and log-Sobolev \cite{logsobolev} inequalities, as well as the emerging work on spectral independence \cite{spectralindependence}. 

Despite all of this, ``bound $\ell^1$ by $\ell^2$ and use eigenvalues'' is still basic and useful. In particular, it must be mentioned that one of the advantages of $\ell^2$ bounds is the availability of comparison theory \cite{comparisontheory}. If one has a rate in $\ell^2$, then it is often possible to get good rates of closely related chains, such as perturbations of the kernel or more drastic variations. (For example, the walk on permutations generated by a single transposition and a single $n$-cycle gets sharp bounds via comparison with random transpositions.) This kind of robustness does not seem to be available for other methods of proof. Additionally, the chi-square distance also serves to bound other widely used discrepancies, such as the (squared) Hellinger distance and relative entropy
\[
    H^2_x(\ell) = \sum_y \left(\sqrt{K^\ell(x, y)} - \sqrt{\pi(y)}\right)^2, \quad \text{KL}_x(\ell) = \sum_y K^\ell(x, y) \log \frac{K^\ell(x, y)}{\pi(y)},
\]
via $H^2_x(\ell) \le \text{KL}_x(\ell) \le \chi_x^2(\ell)$. For proofs and further comparisons with other probability metrics, see \cite{choosemetric}.
\end{remark}

\subsection{Some uses for the eigenvectors}\label{eigenvectorsusesubsection}

The bounds for $\chi_x^2(\ell)$ above show an example calculation where ``the eigenvectors can be used for something.'' When studying convergence of a Markov chain, it can also be informative to see how certain key features of that chain converge. We show now in several examples how the explicit form of our eigenvectors $f_S$ can be useful for such questions. 

One natural statistic on binary strings is the number of \textit{alternations} $T(x)$ (that is, the count of adjacent differing coordinates); for example, $T(011011) = 3$. The celebrated work of Tversky and Kahneman \cite{misperceptionchance} on misperceptions of chance applies to the way we view the ``hot hand'' in basketball games \cite{hothand}, the effect of weather on arthritis pain \cite{weathereffect}, and many other examples. All of these misperceptions happen because most people think that a random sequence should have very many alternations and no long runs of zeros or ones. 

Under a fair coin-tossing model for a uniformly random binary sequence of length $n$, all alternations are independent and thus $T(x)$ has mean $\frac{n-1}{2}$ and standard deviation $\frac{\sqrt{n-1}}{2}$. In particular, when $n$ is large, $T(x)$ normalized by this mean and standard deviation has an approximately normal distribution. If we instead observe a coin-tossing sequence where the probability of heads $p$ is unknown and has uniform prior on $[0, 1]$, then the resulting sequence is exactly distributed as $\pi(x)$ for the binary Burnside process (see the second half of \cref{othermarkovchains} for some further discussion on a related Gibbs sampler chain). Alternations at different coordinates are now no longer independent, and the following discussion shows that $T(x)$ under $\pi(x)$ has a completely different behavior to that in the uniform case.

\begin{proposition}
Let $T_n$ be the random variable $T(x)$ under $\pi_n$ on $C_2^n$. Then $\frac{T_n}{n-1}$ converges in distribution to $2U(1-U)$, where $U$ is uniform on $[0, 1]$.
\end{proposition}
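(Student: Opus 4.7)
The plan is to realize $\pi_n$ as a mixture of product measures and then apply a conditional law of large numbers. Specifically, let $U \sim \mathrm{Uniform}[0,1]$, and conditional on $U = p$ let $X_1, \ldots, X_n$ be i.i.d.\ $\mathrm{Bernoulli}(p)$. For any $x \in C_2^n$ with $|x| = i$,
\[
    \int_0^1 p^i (1-p)^{n-i}\, dp = \frac{1}{(n+1)\binom{n}{i}} = \pi_n(x),
\]
so the joint law of $(X_1, \ldots, X_n)$ under this mixture is exactly $\pi_n$. This is the standard Beta--Bernoulli / P\'olya urn representation, and it is the key structural fact driving the whole proof.

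Next I would condition on $U = p$ and study $T_n = \sum_{i=1}^{n-1} Y_i$ with $Y_i = \mathbbm{1}[X_i \ne X_{i+1}]$. Under the conditional measure, each $Y_i$ is $\mathrm{Bernoulli}(2p(1-p))$, and the sequence $(Y_i)$ is $1$-dependent (since $Y_i$ and $Y_j$ share no coordinate once $|i-j| \ge 2$). A one-line second-moment computation gives $\mathrm{Var}(T_n \mid U = p) = O(n)$, so Chebyshev yields
\[
    \frac{T_n}{n-1} \xrightarrow{\,p\,} 2p(1-p) \qquad \text{conditional on } U = p.
\]

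Finally, to pass from the conditional convergence to the claimed unconditional convergence in distribution, I would test against an arbitrary bounded continuous $\varphi\colon \RR \to \RR$. By the tower property,
\[
    \mathbb{E}\bigl[\varphi(T_n/(n-1))\bigr] = \mathbb{E}\Bigl[\mathbb{E}\bigl[\varphi(T_n/(n-1)) \,\big|\, U\bigr]\Bigr],
\]
and the inner expectation converges pointwise to $\varphi(2U(1-U))$ by the previous step together with continuity of $\varphi$. Dominated convergence (the integrand is bounded by $\|\varphi\|_\infty$) then gives $\mathbb{E}[\varphi(T_n/(n-1))] \to \mathbb{E}[\varphi(2U(1-U))]$, which is the claimed convergence in distribution.

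There is no real obstacle here — once the mixture representation of $\pi_n$ is in hand, everything reduces to a conditional LLN for a $1$-dependent $\{0,1\}$-valued sequence. The only thing to be slightly careful about is that one cannot simply apply the unconditional LLN, because under $\pi_n$ the coordinates are exchangeable but \emph{not} independent; conditioning on $U$ is exactly what restores independence (and thus makes the variance bound routine).
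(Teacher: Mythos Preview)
Your proof is correct and follows essentially the same route as the paper's: realize $\pi_n$ as a uniform mixture of product Bernoulli$(p)$ measures, apply a conditional law of large numbers to the $1$-dependent alternation indicators, and then uncondition via dominated convergence. The only cosmetic difference is that the paper obtains the mixture representation by extending the $\pi_n$ to an infinite exchangeable sequence and invoking de Finetti's theorem, whereas you compute the Beta integral $\int_0^1 p^i(1-p)^{n-i}\,dp = \frac{1}{(n+1)\binom{n}{i}}$ directly; your route is slightly more elementary.
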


A histogram of sampled alternation counts (along with the limiting density) is shown in \cref{fig:alternations}. Notice in particular that even though $T(x)$ can be as large as $n-1$, it typically takes values below $\frac{n}{2}$; the following proof provides an explanation for this.

\begin{figure}
    \centering
    \includegraphics[width=\linewidth]{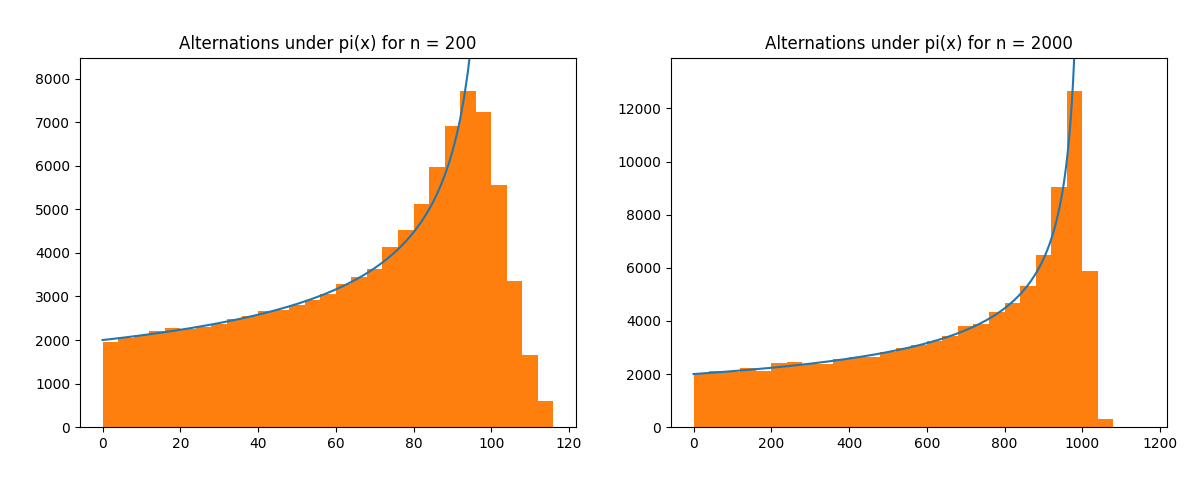}
    \caption{Alternation count histograms for $100000$ binary strings sampled under $\pi_n(x)$ for $n = 200$ and $n = 2000$. The smooth curve corresponds to the limiting density $\frac{1}{\sqrt{1 - 2x}}$ for a random variable distributed as $2U(1-U)$ for $U$ uniform.}\label{fig:alternations}
\end{figure}

\begin{proof}
The measure $\pi_n$ on $C_2^n$ restricted to a subset of $m < n$ coordinates is exactly the measure $\pi_m$ on $C_2^m$; this can be shown by direct calculation or by interpreting binary strings as draws from P\'olya's urn. (In fact, more is true: \cref{restricttocoordinates} shows that a similar statement also holds for the binary Burnside process.) Thus, by consistency of finite-dimensional distributions, we may extend the $\pi_n$s to a measure on infinite binary sequences. Recall that each $\pi_n$ is uniform on the number of ones and that the coordinates are exchangeable under $\pi_n$. So by de Finetti's theorem, this limiting infinite measure can be described as ``pick $p$ uniformly on $[0, 1]$ and flip a $p$-coin independently for each coordinate.''

Thus if we condition on the tail $\sigma$-field and let $p$ be the conditional probability that $x_i = 1$, then the $x_i$s form an iid string of $p$-coin-tosses. Letting $Y_i = 1\{x_i \ne x_{i+1}\}$ (and still conditioning on the tail), the $Y_i$s are each Bernoulli with parameter $2p(1-p)$ and form a $1$-dependent sequence of random variables; in particular we have that $\frac{T_n}{n-1} = \frac{1}{n-1} \sum_{i=1}^{n-1} Y_i$ converges in probability to $2p(1-p)$ by the weak law of large numbers, uniformly in $p$. Thus unconditionally we must have $\frac{T_n}{n-1}$ converge to $2U(1-U)$ for $U$ uniform on $[0, 1]$, as desired.
\end{proof}

In particular, while $\frac{T_n}{n-1}$ converges in probability to $\frac{1}{2}$ under the uniform measure on binary strings, it instead converges to some nondegenerate random variable with mean $\frac{1}{3}$ under $\pi_n$. We now show convergence of $\mathbb{E}[T(X_n)]$ to its stationary average using our $\beta_1$-eigenfunctions:

\begin{example}
Consider alternations in the binary Burnside process on $C_2^n$. We have (using the formula \cref{beta1eigenfunction}) that $1\{x_i \ne x_{i+1}\} = \frac{1 - f_{\{i,i+1\}}(x)}{3}$ for any $1 \le i \le n-1$. Therefore summing over all $i$, the total number of alternations is
\[
    T(x) = \sum_{i=1}^{n-1}1\{x_i \ne x_{i+1}\} = \frac{n-1}{3} - \frac{1}{3} \sum_{i=1}^{n-1} f_{\{i,i+1\}}(x).
\]
Each $f_{\{i,i+1\}}(x)$ is an eigenfunction of $K$ of eigenvalue $\frac{1}{4}$. Thus if $X_0 = x, X_1, \cdots, X_\ell, \cdots$ records the location of the binary Burnside process started at $x$, we have
\begin{align*}
    \mathbb{E}[T(X_\ell) | X_0 = x] &= \mathbb{E}[K^\ell T(x)] \\
    &= \frac{n-1}{3} - \frac{1}{3}\sum_{i=1}^{n-1} \mathbb{E}[K^\ell f_{\{i,i+1\}}(x)] \\
    &= \frac{n-1}{3} - \frac{1}{3 \cdot 4^\ell} \sum_{i=1}^{n-1} \mathbb{E}[f_{\{i,i+1\}}(x)] \\
    &= \frac{n-1}{3} - \frac{1}{4^\ell}\left(\frac{n-1}{3} - T(x)\right);
\end{align*}
in particular, the expected number of alternations is always close to its mean under $\pi_n$ after just $\log_4 n + c$ steps from any starting state.
\end{example}

Similar calculations also yield the same type of exponential decay for other ``pairwise-coordinate'' statistics, such as the covariance of the number of ones between distinct coordinate sets of $x$. Indeed, for any disjoint sets $S, T \subseteq [n]$ (remembering that $|x_S|$ denotes the number of ones in $x$ among the set $S$),
\begin{align*}
    \text{Cov}(|x_S|, |x_T|) &= \sum_{i \in S, j \in T} \text{Cov}(x_i, x_j) \\
    &= \sum_{i \in S, j \in T} \mathbb{E}\left[\left(x_i - \frac{1}{2}\right) \left(x_j - \frac{1}{2}\right) \right] \\
    &= \sum_{i \in S, j \in T} \frac{1}{4} \left(1\{x_i = x_j\} - 1\{x_i \ne x_j\} \right) \\
    &= \sum_{i \in S, j \in T} \frac{1}{6} \left(f_{\{i,j\}}(x) + \frac{1}{2}\right),
\end{align*}
so again the deviation from the mean $\mathbb{E}[\text{Cov}(|x_S|, |x_T|)] = \frac{1}{12}|S||T|$ will decay exponentially in the number of steps taken.

\medskip

Since the number of alternations $T(X_\ell)$ does not concentrate around its mean, it is also informative to compute its variance. For this, we again expand out in terms of eigenvectors:
\begin{align*}
    \text{Var}(T(X_\ell) | X_0 = x) &= \text{Var}\left(\frac{1}{3} \sum_{i=1}^{n-1} f_{\{i, i+1\}}(X_\ell) \,\middle|\, X_0 = x\right) \\
    &= \frac{1}{9}\, \text{Cov}\left(\sum_{i=1}^{n-1}f_{\{i, i+1\}}(X_\ell), \sum_{j=1}^{n-1}f_{\{j, j+1\}}(X_\ell) \,\middle|\, X_0 = x\right) \\
    &= \frac{1}{9}\sum_{i,j = 1}^{n-1} \Bigg(\mathbb{E}\left[f_{\{i, i+1\}}(X_\ell)f_{\{j, j+1\}}(X_\ell) \,\middle|\, X_0 = x \right] \\
    &\hspace{2cm}- \mathbb{E}\left[f_{\{i, i+1\}}(X_\ell) \,\middle|\, X_0 = x \right]\mathbb{E}\left[f_{\{j, j+1\}}(X_\ell) \,\middle|\, X_0 = x \right]\Bigg).
\end{align*}
The latter term decays exponentially because each $f_{\{i, i+1\}}$ is an eigenvector of eigenvalue $\frac{1}{4}$, but for the former term we need to first rewrite $f_{\{i, i+1\}} f_{\{j, j+1\}}$ as a linear combination of eigenvectors, which looks different for each of the cases $i=j$, $|i-j| = 1$, and $|i-j| > 1$. For distinct indices $a, b, c, d$, we have
\[
    f_{\{a,b\}} f_{\{c, d\}} = \frac{18}{35} f_{\{a,b,c,d\}} - \frac{2}{7} (f_{\{a, b\}} + f_{\{c, d\}}) + \frac{3}{14} (f_{\{a,c\}} + f_{\{a,d\}} + f_{\{b,c\}} + f_{\{b,d\}}) + \frac{1}{5} f_{\varnothing},
\]
\[
    f_{\{a, b\}} f_{\{a, c\}} = \frac{3}{2} f_{\{b, c\}} - \frac{1}{2} (f_{\{a, b\}} + f_{\{a, c\}}) + \frac{1}{2} f_{\varnothing},
\]
\[
    f_{\{a, b\}}^2 = -f_{\{a, b\}} +2 f_{\varnothing}.
\]
Plugging in these formulas and using the identities \textbf{(a)} $\mathbb{E}\left[f_{\varnothing}(X_\ell) \,\middle|\, X_0 = x \right] = 1$, \textbf{(b)} $\mathbb{E}\left[f_{\{a, b\}}(X_\ell) \,\middle|\, X_0 = x \right] = \left(\frac{1}{4}\right)^\ell f_{\{a, b\}}(x)$, and \textbf{(c)} $\mathbb{E}\left[f_{\{a, b, c, d\}}(X_\ell) \,\middle|\, X_0 = x \right] = \left(\frac{9}{64}\right)^\ell f_{\{a, b, c, d\}}(x)$ yields an expression for $\text{Var}(T(X_\ell) | X_0 = x)$ in terms of only exponential factors in $\ell$ and eigenfunctions evaluated at $x$. If we only keep the terms corresponding to $f_{\varnothing}$ (since all other terms are exponentially decaying), we find the variance of $T(x)$ under the stationary distribution $\pi_n$. Thus for all $n \ge 2$, $\text{Var}(T(X_\ell) | X_0 = x)$ is always asymptotic to
\[
    \text{Var}_{\pi_n}(T(x)) = \frac{1}{9}\left((n-1) \cdot 2 + (2n-4) \cdot \frac{1}{2} + (n^2 - 5n + 6) \cdot \frac{1}{5} \right) = \frac{1}{45}(n^2 + 10n - 14).
\]
In particular, $\text{Var}_{\pi_n}\left(\frac{T(x)}{n-1}\right)$ does converge to $\frac{1}{45}$, the variance of $2U(1-U)$ for $U$ uniform on $[0, 1]$, as $n \to \infty$.

\begin{remark}
In \cref{eigenvectors}, we claim that $f_S$ is an eigenvector of $K$ for any subset $S$ of even size. In fact, the proof of this result (in \cref{eigenvectorsection}) shows that the formula $f_S(x) = (-1)^{|x_S|} \binom{k}{|x_S|}$ also yields an eigenvector of eigenvalue $0$ for any $S$ of odd size $k$, and that the collection of all such eigenvectors is a basis for the $0$-eigenspace. Other applications of our eigenvectors (i.e. expanding other functions as linear combinations involving binomial coefficients) may find this basis useful. For example, letting $|X_\ell|$ be the number of ones after $\ell$ steps, we find for any starting $x$ and all $\ell \ge 1$ (using $|x| = \frac{n - \sum_{i=1}^n f_{\{i\}}(x)}{2}$) that
\[
    \mathbb{E}\left[|X_\ell|\right] = \frac{n}{2}, \quad \text{Var}(|X_\ell|) = \frac{n(n+2)}{12} \cdot \left(1 - \frac{1}{4^\ell}\right) + \left(|x| - \frac{n}{2}\right)^2 \cdot \frac{1}{4^\ell}.
\]
That is, the number of ones has the right mean after just one step, but it takes order $\log n$ steps to get the variance right.
\end{remark}

\subsection{$\ell^1$ versus $\ell^2$ examples}\label{l1vsl2subsection}

By now, the literature of carefully worked examples is so large that a serious survey would require a book-length effort; see \cite{saloffcoste} for a start. In general, changing the metric of convergence can result in drastically different results, and Gibbs--Su \cite{choosemetric} provides a general survey. We content ourselves with examples drawn from our own work. 

The first examples of sharp mixing time analyses are found in Aldous \cite{aldous1983} and Diaconis--Shahshahani \cite{randomtranspositions}. Random transpositions on the symmetric group $S_n$ was found to have a cutoff at the mixing time of $\frac{1}{2} n \log n + cn$, both in $\ell^1$ and $\ell^2$. Similar results were found for simple random walk on $C_2^n$, simple random walk on the $n$-cycle, and the Bernoulli-Laplace urn. All results were proved by ``bound $\ell^1$ by $\ell^2$ and use eigenvalues.''

Mixing occurs at the same order, but with different constants for the cutoff in $\ell^1$ and $\ell^2$, for random walk on certain expanding graphs called Ramanujan graphs; see Lubetzky--Peres \cite{cutoffexpanders} for precise statements. There are a host of other results where mixing occurs at the same order (up to constants) in $\ell^1$ and $\ell^2$; most of the examples that use the previously mentioned ``comparison theory'' fall into this class. For example, random walk on the symmetric group $S_n$ based on choosing either the transposition $(1, 2)$ or the $n$-cycle $(1, 2, \cdots, n)$ with probability $\frac{1}{2}$ each (for $n$ odd) mixes in order $n^3 \log n$ for both $\ell^1$ and $\ell^2$.

These classical examples have had considerable development. For example, Olesker-Taylor, Teyssier, and Th\'evenin \cite{comparablel1l2conjugacy} show that any random walk supported on any conjugacy class in $S_n$ (such as random $3$-cycles or random $n$-cycles) have comparable $\ell^1$ and $\ell^2$ rates of convergence. In a sustained development, Guralnick, Larsen, Liebeck, Shalev, and Tiep (in various combinations) have shown the same fact for walks supported on conjugacy classes for finite groups of Lie type. A convenient reference is \cite{characterlietype}, detailing a large development.

A different part of the spectrum concerns finite abelian and nilpotent groups. A series of papers by Hermon, Olesker-Taylor, and Huang have close to complete results -- see \cite{hermontaylor, hermonhuang}.

The above is a pale summary of a rich literature, but this is not the time or place for further details.

There are also more refined ``limit profile'' results where ``bound $\ell^1$ by $\ell^2$'' does not provide sufficiently refined estimates. For example, the limit shape results for random walk on the hypercube \cite{nearestneighbor}, riffle shuffles \cite{riffleshuffle}, and random transpositions \cite{teyssierlimitprofile} all require more detailed analysis. 

In the other camp, there are some cases where $\ell^1$ and $\ell^2$ rates are simply different. The easiest example is lazy simple random walk on the complete graph on $n$ vertices, which has bounded $\ell^1$ mixing time but needs order $\log n$ in $\ell^2$. For a more striking example, Peres and Revelle \cite{lamplighter} study simple random walk on the lamplighter group with underlying graph the $n$-cycle $C_n$. They show that order $n^2$ steps are necessary and sufficient for $\ell^1$ convergence, while order $n^3$ steps are necessary and sufficient for $\ell^2$ convergence. The present paper offers an even more extreme example where the $\ell^1$ and $\ell^2$ bounds are exponentially different.

\begin{remark}
As an additional sidenote, continuous-time analogs of discrete-time Markov chains may also have notably different mixing time behavior. While Chen and Saloff-Coste \cite{chensaloffcoste} prove that lazy discrete-time Markov chains exhibit total variation cutoff if and only if the associated continuous-time Markov processes do, Hermon and Peres \cite{hermonperes} show that this is no longer true when using the closely-related metric of \textit{separation distance}. Turning to $\ell^2$ mixing, Saloff-Coste and Z\'u\~niga \cite{zuniga} show that cutoff times occur at different orders in discrete and continuous time for some conjugacy-class random walks on the symmetric and alternating group, and they remark that this occurs due to the effect of a ``very large number of very small eigenvalues.'' Our calculations in this paper for the binary Burnside process show that high eigenvalue multiplicity can also manifest in differences between $\ell^1$ and $\ell^2$, even when restricted only to discrete time.
\end{remark}

\subsection{The Burnside process}\label{burnsidesubsection}

The Burnside process was introduced by Jerrum \cite{jerrum} and studied by Goldberg and Jerrum \cite{goldbergjerrum}. Their original motivation was computational complexity, and they produced examples where the Markov chain requires exponentially many steps to converge. More practical applications later appeared: Diaconis--Tung \cite{diaconistung} and Diaconis--Howes \cite{contingency} use the chain as an extremely effective algorithm for generating uniform partitions of $n$ (of size roughly $10^8$) and large contingency tables. Diaconis--Zhong \cite{burnsideimportance} uses the Burnside process to generate random conjugacy classes in the group $U(n, q)$, where for example for $n = 40, q = 2$, the orbit space has order roughly $2^{400}$. And Bartholdi--Diaconis \cite{polyatrees} describes an algorithm for using the chain to generate large uniform unlabeled trees and compare various statistics with the corresponding labeled trees. In all of these examples, extensive empirical testing indicates that the chain converges after just $100$ steps. However, \textit{no} explicit rates have been proven for any of these examples.

The binary Burnside process has been carefully studied as a first example towards proving such rates, and the papers of Jerrum, Aldous, and Diaconis--Zhong in the introduction have all contributed to this. The present paper shows that even this simple case has unknown corners. Good results for the Burnside process on conjugacy classes of CA groups are developed by Rahmani \cite{CAgroups}, and good results for set partitions can be found in the work of Paguyo \cite{paguyo}. Most recently, a sharp analysis of mixing time and limit profile for the Burnside process on certain Sylow double cosets of $S_n$ was obtained by Howes \cite{howes}.

\section{Properties of the chain}\label{propertiessection}

This section develops symmetry and lumping properties of the binary Burnside process. Throughout the rest of the paper, we may write $K_n$ for $K$ for the sake of clarity.

To begin, here is a closed form for the transition matrix:

\begin{proposition}\label{closedformk}
Let $K$ denote the transition matrix for the binary Burnside process. Fix $x, y \in C_2^n$, and for $a, b \in \{0, 1\}$, let $n_{ab}$ be the number of coordinates $i$ where $x_i = a$ and $y_i = b$ (so $n_{00} + n_{01} + n_{10} + n_{11} = n$). Then 
\[
    K(x, y) = \frac{\binom{2n_{00}}{n_{00}} \binom{2n_{01}}{n_{01}} \binom{2n_{10}}{n_{10}} \binom{2n_{11}}{n_{11}}}{4^n\binom{n_{00} + n_{01}}{n_{00}}\binom{n_{10} + n_{11}}{n_{10}}}.
\]
\end{proposition}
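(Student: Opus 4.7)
The plan is to evaluate the formula $K(x,y) = \frac{1}{|G_x|} \sum_{s \in G_x \cap G_y} \frac{1}{|\mf{X}_s|}$ by factoring the sum according to the four possible pair-values $(x_i, y_i) \in \{0,1\}^2$. First I would identify the three ingredients in terms of the numbers $n_{ab}$. For the stabilizer, $|G_x| = i!(n-i)!$ with $i = |x| = n_{10} + n_{11}$, so $|G_x| = (n_{00}+n_{01})!(n_{10}+n_{11})!$. For the intersection, a permutation fixing both $x$ and $y$ must permute each of the four coordinate classes (those where $(x_i,y_i) = (0,0), (0,1), (1,0), (1,1)$) among themselves, so $G_x \cap G_y \cong S_{n_{00}} \times S_{n_{01}} \times S_{n_{10}} \times S_{n_{11}}$. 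For the denominator, I would use the standard fact that $|\mf{X}_s| = 2^{c(s)}$, where $c(s)$ is the number of cycles of $s$ (each cycle contributes an independent binary choice).

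The factorization of $G_x \cap G_y$ lets me split the sum as
\[
    \sum_{s \in G_x \cap G_y} \frac{1}{2^{c(s)}} = \prod_{a,b \in \{0,1\}} \sum_{s \in S_{n_{ab}}} \frac{1}{2^{c(s)}}.
\]
The key identity I would invoke is the classical generating function
\[
    \sum_{s \in S_m} z^{c(s)} = z(z+1)(z+2) \cdots (z+m-1).
\]
Substituting $z = 1/2$ collapses the right-hand side to $\frac{1 \cdot 3 \cdot 5 \cdots (2m-1)}{2^m} = \frac{(2m)!}{4^m m!}$, giving
\[
    \sum_{s \in S_m} \frac{1}{2^{c(s)}} = \frac{(2m)!}{4^m \, m!} = \frac{n! \binom{2m}{m}}{4^m} \cdot \frac{1}{m!} \cdot m! = \frac{m! \binom{2m}{m}}{4^m}.
\]

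The last step is a bookkeeping simplification. Multiplying the four factors gives $\frac{1}{4^n} \prod_{ab} n_{ab}! \binom{2n_{ab}}{n_{ab}}$; dividing by $|G_x| = (n_{00}+n_{01})!(n_{10}+n_{11})!$ and using $\frac{n_{00}! \, n_{01}!}{(n_{00}+n_{01})!} = \binom{n_{00}+n_{01}}{n_{00}}^{-1}$ (and likewise for the pair $(n_{10},n_{11})$) yields the stated formula. No step is really an obstacle: the only thing one needs to notice is the right generating-function identity to evaluate $\sum_{S_m} 2^{-c(s)}$, after which the rest is algebraic cleanup. The conceptual content is entirely in recognizing that both $G_x \cap G_y$ and the $2^{c(s)}$ weighting split into independent pieces indexed by the four coordinate types $(x_i, y_i)$.
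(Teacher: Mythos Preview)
Your proof is correct and follows essentially the same route as the paper's: identify $G_x\cap G_y \cong S_{n_{00}}\times S_{n_{01}}\times S_{n_{10}}\times S_{n_{11}}$, factor the sum over cycle counts, and evaluate $\sum_{s\in S_m} 2^{-c(s)}$ via the generating function $z(z+1)\cdots(z+m-1)$ at $z=1/2$. (There is a harmless typo in your displayed chain of equalities --- the stray $n!$ should be $m!$ --- but your final expression $\frac{m!\binom{2m}{m}}{4^m}$ and the subsequent bookkeeping are correct.)
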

\begin{proof}
The permutations that fix $x$ can be described by $G_x = S_{n_{00} + n_{01}} \times S_{n_{10} + n_{11}}$, in which we permute the indices where $x_i = 0$ and also the indices where $x_i = 1$. Similarly, $G_x \cap G_y = S_{n_{00}} \times S_{n_{01}} \times S_{n_{10}} \times S_{n_{11}}$ is the set of all permutations whose cycles are each contained entirely within each type of coordinate. If $\sigma = \sigma_{00} \times \sigma_{01} \times \sigma_{10} \times \sigma_{11} \in G_x \cap G_y$, then
\[
    \frac{1}{|\mf{X}_g|} = \left(\frac{1}{2}\right)^{C(\sigma_{00}) + C(\sigma_{01}) + C(\sigma_{10}) + C(\sigma_{11})},
\]
where $C(\tau)$ denotes the number of cycles in the permutation $\tau$, since a binary $n$-tuple is fixed by $\sigma$ if and only if it is constant (either all $0$ or all $1$) on each cycle. Therefore, by plugging into \cref{burnsidegenerictransition},
\begin{align*}
    K(x, y) &= \frac{1}{(n_{00} + n_{01})! (n_{10} + n_{11})!} \sum_{\sigma_{00}, \sigma_{01}, \sigma_{10}, \sigma_{11}}\left(\frac{1}{2}\right)^{C(\sigma_{00}) + C(\sigma_{01}) + C(\sigma_{10}) + C(\sigma_{11})} \\
    &= \frac{1}{(n_{00} + n_{01})! (n_{10} + n_{11})!} \sum_{\sigma_{00}} \left(\frac{1}{2}\right)^{C(\sigma_{00})} \sum_{\sigma_{01}} \left(\frac{1}{2}\right)^{C(\sigma_{01})} \sum_{\sigma_{10}} \left(\frac{1}{2}\right)^{C(\sigma_{10})} \sum_{\sigma_{11}} \left(\frac{1}{2}\right)^{C(\sigma_{11})}.
\end{align*}
Now because the generating function for permutation cycle count is given by
\[
    C_n(x) = \sum_{\sigma \in S_n} x^{C(\sigma)} = x(x+1)\cdots(x+n-1)
\]
(for example by induction, since there are $(n-1)$ ways to insert the number $n$ into an existing cycle and $1$ way to add a new cycle with just $n$), we have that 
\[
    \sum_{\sigma_{00}} \left(\frac{1}{2}\right)^{C(\sigma_{00})} = \frac{1}{2} \left(\frac{1}{2} + 1\right) \cdots \left(\frac{1}{2} + n_{00} - 1\right) = \frac{(2n_{00}-1)!!}{2^{n_{00}}} = \frac{(2n_{00})!}{4^{n_{00}} n_{00}!}
\]  
and similarly for the other terms. Thus
\begin{align*}
K(x, y) &= \frac{1}{(n_{00} + n_{01})! (n_{10} + n_{11})!}  \frac{(2n_{00})!}{4^{n_{00}} n_{00}!} \frac{(2n_{01})!}{4^{n_{01}} n_{10}!} \frac{(2n_{10})!}{4^{n_{10}} n_{10}!} \frac{(2n_{11})!}{4^{n_{11}} n_{11}!} \\
&= \frac{1}{4^n} \frac{n_{00}! n_{01}! n_{10}! n_{11}!}{(n_{00} + n_{01})! (n_{10} + n_{11})!} \frac{(2n_{00})!}{n_{00}!^2} \frac{(2n_{01})!}{n_{01}!^2} \frac{(2n_{10})!}{n_{10}!^2} \frac{(2n_{11})!}{n_{11}!^2},
\end{align*}
which rearranges to the desired result.
\end{proof}

\begin{corollary}\label{Kmatrixsymmetries}
The transition matrix $K(x, y)$ satisfies
\[
    K(x, y) = K(x, \overline{y}) = K(\overline{x}, \overline{y}) = K(\sigma(x), \sigma(y)),
\]  
where $\overline{x}$ is the binary $n$-tuple obtained from $x$ by flipping all bits and $\sigma \in S_n$ is any permutation.
\end{corollary}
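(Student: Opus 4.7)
The plan is to verify each of the three equalities either directly from the closed form in \cref{closedformk} or from the underlying definition $K(x,y)=\frac{1}{|G_x|}\sum_{s\in G_x\cap G_y}\frac{1}{|\mf{X}_s|}$, whichever is cleaner.

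For the permutation equality $K(\sigma(x),\sigma(y))=K(x,y)$, I would argue from the definition. Conjugation gives $G_{\sigma(x)}=\sigma G_x\sigma^{-1}$ and $G_{\sigma(x)}\cap G_{\sigma(y)}=\sigma(G_x\cap G_y)\sigma^{-1}$, while $|\mf{X}_{\sigma s\sigma^{-1}}|=|\mf{X}_s|$ since conjugate permutations share a cycle type and hence the same number of fixed binary $n$-tuples (namely $2^{C(s)}$). Reindexing the sum by $s\mapsto \sigma s\sigma^{-1}$ finishes this case.

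For the two bit-flip equalities, the quickest route is to read them off the closed form. Complementing $y$ swaps $n_{00}\leftrightarrow n_{01}$ and $n_{10}\leftrightarrow n_{11}$, while simultaneously complementing both $x$ and $y$ sends $n_{00}\leftrightarrow n_{11}$ and $n_{01}\leftrightarrow n_{10}$. In either case the numerator $\binom{2n_{00}}{n_{00}}\binom{2n_{01}}{n_{01}}\binom{2n_{10}}{n_{10}}\binom{2n_{11}}{n_{11}}$ is invariant by inspection. For the denominator: under $y\mapsto\bar y$, each factor $\binom{n_{00}+n_{01}}{n_{00}}$ and $\binom{n_{10}+n_{11}}{n_{10}}$ is individually preserved via $\binom{a+b}{a}=\binom{a+b}{b}$; under the simultaneous bit-flip, the two factors instead exchange with each other (since $\binom{n_{00}+n_{01}}{n_{00}}$ becomes $\binom{n_{11}+n_{10}}{n_{11}}=\binom{n_{10}+n_{11}}{n_{10}}$ and vice versa), so the product is again unchanged. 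Combining the two bit-flip identities gives all three forms claimed.

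I do not expect a substantive obstacle: this corollary is essentially bookkeeping once \cref{closedformk} is in hand, and the only mild care required is tracking which pairs of $n_{ab}$'s are swapped by each operation. A slightly more conceptual alternative — observing that $G_x=G_{\bar x}$ (any permutation fixing $x$ also fixes $\bar x$) and that $\mf{X}_s$ is closed under bitwise complementation, so that the bit-flip symmetries are visible at the level of the definition — is available but trades a little arithmetic for a little algebra and is not meaningfully shorter.
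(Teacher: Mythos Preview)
Your proposal is correct and essentially matches the paper's approach: the paper's proof is a single sentence noting that the closed form in \cref{closedformk} is invariant under permutation of coordinates or negation of either argument, which is exactly the bookkeeping you carry out explicitly for the bit-flip cases. Your choice to handle the permutation equality via conjugation in the definition rather than via the closed form is a harmless variation (and arguably cleaner), since under simultaneous permutation of $x$ and $y$ the counts $n_{ab}$ are visibly unchanged anyway.
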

\begin{proof}
All equalities follow from observing that the quantity in \cref{closedformk} is left invariant under permutation of coordinates or negation of either $x$ or $y$.
\end{proof}

The first two equalities in \cref{Kmatrixsymmetries} describe an additional symmetry of the chain which will be helpful in obtaining eigenvectors, while the last equality may be viewed as a consequence of the lumping of the binary Burnside process to its orbits (since $|x| = |\sigma(x)|$ for all $x \in C_2^n$ and $\sigma \in S_n$). Similar analysis for other Markov chains with various symmetries, towards comparing the Metropolis algorithm to other random walks on graphs, can be found in \cite{boydsymmetry}.

To state the next (crucial) feature of $K(x, y)$, recall that a function of a Markov chain need not be a Markov chain. For the (general) Burnside process, as in the introduction, \cite[Section3]{boseeinstein} shows that the chain ``lumped to orbits'' remains a Markov chain with a uniform stationary distribution. Background on lumping, in particular Dynkin's criterion, can be found in \cite{kemenysnell} or \cite{lumping}. Its application to the binary Burnside process underlies the results for $||K_{\underline{0}}^\ell - \pi||_{\text{TV}}$ and $\chi_{\underline{0}}^2(\ell)$ stated in the introduction, using that $\underline{0}$ is in its own orbit. This next result shows that a very different set of lumpings also remains valid:

\begin{proposition}\label{restricttocoordinates}
The restriction of the Burnside process on $(C_2^n, S_n)$ to any $m \le n$ of its coordinates is also a Markov chain, and its transition probabilities are exactly given by the Burnside process on $(C_2^m, S_m)$.
\end{proposition}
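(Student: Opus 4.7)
The plan is to argue by induction on $n - m$, with base case $m = n$ trivial. By the permutation symmetry $K(\sigma(x), \sigma(y)) = K(x,y)$ of \cref{Kmatrixsymmetries}, and the fact that the target Burnside process on $C_2^m$ has the same symmetry, I may assume without loss of generality that $S = [m]$. The inductive step then reduces to the case $m = n - 1$: showing that the marginal chain on coordinates $[n-1]$ is the binary Burnside process on $C_2^{n-1}$. By Dynkin's criterion, it suffices to verify that for every $x \in C_2^n$ and every $z \in C_2^{n-1}$, the quantity $\sum_{y_n \in \{0,1\}} K(x, (z, y_n))$ depends on $x$ only through $x|_{[n-1]}$, and moreover equals the $C_2^{n-1}$-Burnside transition from $x|_{[n-1]}$ to $z$.

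Rather than manipulating the closed form of \cref{closedformk} (which would force a nontrivial double binomial sum), I would argue directly from the probabilistic two-step description of the chain. From $x$ with $n_0$ zeros and $n_1$ ones, draw $s$ uniformly from $G_x = S_{n_0} \times S_{n_1}$ (permuting zero- and one-positions independently), label each cycle with an independent fair coin, and read off $y$. The key ingredient is a standard lemma on random permutations: if $t \in S_N$ is uniform, the permutation $t' \in S_{N-1}$ obtained by \emph{splicing out} $N$ (set $t'(i) = t(i)$ unless $t(i) = N$, in which case $t'(i) = t(N)$) is uniform in $S_{N-1}$. This follows from the insertion count, since every $t' \in S_{N-1}$ has exactly $N$ preimages, one for each slot at which $N$ may be inserted (as a new fixed point, or after any of the $N-1$ elements of $[N-1]$ in its cycle).

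Now assume WLOG $x_n = 0$ (the case $x_n = 1$ is symmetric). Applying the lemma to the $S_{n_0}$-factor, the spliced permutation $s'$ is uniform in $S_{n_0 - 1} \times S_{n_1} = G_{x|_{[n-1]}}$. It remains to check that the cycle labels determining $y|_{[n-1]}$ are fair independent coin flips on the cycles of $s'$. There are two cases: if $n$'s cycle in $s$ has length $k > 1$, splicing leaves a cycle of length $k - 1$ on the same elements of $[n-1]$, and its label (the original fair coin) is exactly the common value of $y$ on those elements; if $n$ is a fixed point of $s$, its coin flip was used only for $y_n$, so the corresponding cycle and label disappear entirely. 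In either case, the joint distribution of $s'$ together with the cycle labels seen by $y|_{[n-1]}$ is precisely the two-step construction of the Burnside process on $C_2^{n-1}$ applied to $x|_{[n-1]}$, completing the induction.

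The main obstacle is the bookkeeping in the previous paragraph: one must verify that the ``fresh'' coin associated with the singleton cycle $\{n\}$ (when $n$ is a fixed point of $s$) is genuinely independent of the labels that propagate down to $y|_{[n-1]}$, and conversely that when $n$ lies in a longer cycle the inherited label on the spliced cycle remains a uniform fair coin independent of everything else. Both follow from the independence of the cycle labels in the original construction, but this is the only point where a bit of care is required to make the correspondence between cycles of $s$ and cycles of $s'$ precise.
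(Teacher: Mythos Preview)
Your proof is correct and follows essentially the same approach as the paper's: both arguments rest on the fact that deleting elements from a uniform random permutation yields a uniform permutation on the remaining elements, with the ``same cycle'' relation preserved, so that the inherited cycle labels are still independent fair coins. The only difference is packaging---the paper removes all $n-m$ coordinates at once via the Foata-style record bijection, whereas you peel off one coordinate at a time using the splicing map and induction; your preimage-counting justification of the splicing lemma is arguably more elementary, but the underlying mechanism is identical.
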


\begin{proof}
Given any permutation $\sigma \in S_n$, we may write it uniquely in cycle notation by cyclically moving the largest element in each cycle to the beginning and then sorting those largest elements from smallest to largest. For example, the one-line permutation $26375841$ becomes $(3)(5)(74)(8126)$. Observe that if we remove the parentheses from this expression, then a new cycle begins at every left-to-right record (that is, every largest number starting from the left), so we can read off the cycle notation simply from the sequence of numbers.

But now for any $t \in \{1, 2, \cdots, n\}$, if we remove the numbers $1$ through $t$ in our cycle notation, this does not change whether each of the numbers $t+1$ through $n$ is a record or not. Thus regardless of the order in which $1$ through $t$ appear, we will get the same permutation on $t+1$ through $n$ after erasing $1$ through $t$. In fact, any two elements of $\{t+1, \cdots, n\}$ end up in the same cycle if and only if they were in the same cycle before the erasure. So if we sample a uniformly random permutation from $S_n$ and then erase the numbers $1$ through $t$ in the cycle notation described above, then the result is a uniformly random permutation from $S_{n-t}$ (on the remaining numbers).

Now consider what happens in one step of the Burnside process. For clarity, first consider the case where we restrict to the last $m$ coordinates. If we are at some state $x$ in $C_2^n$ at the start of a step, then the coordinate set can be partitioned into the locations of $0$s and $1$s in $x$; call these sets $L_0, L_1$. The process specifies that we first pick a uniform group element fixing $x$, which is a uniform permutation from $S_{L_0} \times S_{L_1}$, then write this permutation in cycle notation, and then uniformly assign to each cycle either $0$ or $1$. But if we erase the numbers $1, \cdots, n-m$ from our cycle notation, by the logic in the previous paragraph, we get a uniform permutation on $S_{L_0 \cap [n-m+1, n]}$ and on $S_{L_1 \cap [n-m+1, n]}$, hence a uniform permutation among all choices of $S_{L_0 \cap [n-m+1, n]} \times S_{L_1 \cap [n-m+1, n]}$. Furthermore, each of the remaining cycles is still uniformly assigned either $0$ or $1$ because the ``being in the same cycle'' property is preserved. Therefore, we see that Dynkin's criterion for lumping holds here, since the transition probabilities into each potential orbit (indexed by the values of the tuple on the last $m$ indices) are the same regardless of the values of the first $n-m$ indices, and they are exactly those given by the Burnside process on $C_2^m$.

This argument also works if we restrict to any other subset of the indices instead of the last $m$ coordinates; the only modification is that instead of sorting the cycles in purely increasing order, we use an ordering of $[n]$ such that all elements of the subset are ordered after all other elements. This completes the proof.
\end{proof}

Our strategy for writing down explicit eigenvectors of $K$ will combine lumping down to the binary Burnside process for smaller $n$ and lumping to the orbits $\mc{O}_i$ of \cref{binaryburnsideorbits}.

\section{Eigenvalues and eigenvectors; proof of \cref{eigenvectors}}\label{eigenvectorsection}

The determination of the eigenvalues and eigenvectors of the binary Burnside process on $C_2^n$ depends on the explicit diagonalization of the orbit chain $K^{\text{lumped}}$ on $\{0, 1, \cdots, n\}$. To be precise, this is the chain which follows the dynamics of the binary Burnside process but then only records the orbit $\mc{O}_i$ of the state (which we identify with the integer $i$). An explicit form for $K^{\text{lumped}}(i, j)$ (which is not needed here) is found in \cite[Eqn.(3.1)-(3.3)]{boseeinstein}. The diagonalization we require is recorded here:

\begin{proposition}[\cite{diaconiszhong}, Theorem 2]\label{lumpedeigenvalues}
The eigenvectors of the Markov chain $K^{\text{lumped}}_n$ are the discrete Chebyshev polynomials $T_n^m$ on $\{0, 1, \cdots, n\}$. The nonzero eigenvalues are $\beta_k = \frac{1}{2^{4k}} \binom{2k}{k}^2$ for $k = 0, 1, \cdots, \lfloor \frac{n}{2} \rfloor$, corresponding to the even-degree Chebyshev polynomials $T_n^{2k}$, respectively. All remaining eigenvalues are zero, corresponding to the odd-degree Chebyshev polynomials $T_n^m$ for odd $m \le n$.
\end{proposition}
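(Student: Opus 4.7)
The plan is to exploit two structural facts about $K^{\mathrm{lumped}}_n$: it preserves polynomial degree on $\{0, 1, \ldots, n\}$, and it is self-adjoint in $L^2(\pi)$, where $\pi$ is uniform on this set (the lumped stationary distribution, obtained by summing $\pi(x) = 1/((n+1)|\mc{O}_x|)$ over $x \in \mc{O}_i$). These two facts together force the eigenvectors to be the orthogonal polynomials under $\pi$, which are the discrete Chebyshev polynomials $T_n^m$, and pin down each eigenvalue as a leading-coefficient ratio of the action.

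First I would compute the action on the falling-factorial basis. Using the restriction property (\cref{restricttocoordinates}) together with the identity $\binom{|y|}{d} = \sum_{|S|=d}\prod_{j \in S} y_j$ (valid for $y_j \in \{0, 1\}$), the expectation $\mathbb{E}_{|x|=i}[\binom{|y|}{d}]$ reduces to a sum of $P(y_S = \underline{1}_S \mid x)$ over $d$-subsets $S \subseteq [n]$. Each such probability depends only on $|x_S|$ and, by \cref{closedformk}, equals $\binom{2|x_S|}{|x_S|}\binom{2(d-|x_S|)}{d-|x_S|}/4^d$. Counting subsets of size $d$ with $|x_S|=k$ as $\binom{i}{k}\binom{n-i}{d-k}$ yields
\[
\mathbb{E}_{|x|=i}\!\left[\binom{|y|}{d}\right] = \frac{1}{4^d}\sum_{k=0}^d \binom{i}{k}\binom{n-i}{d-k}\binom{2k}{k}\binom{2(d-k)}{d-k},
\]
a polynomial in $i$ of degree exactly $d$, so $K^{\mathrm{lumped}}_n$ preserves polynomials of degree $\le d$.

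Second, for each $m$ I would observe that, by self-adjointness and degree preservation, $\langle K^{\mathrm{lumped}}_n T_n^m, q\rangle = \langle T_n^m, K^{\mathrm{lumped}}_n q\rangle = 0$ for every polynomial $q$ of degree $< m$. Hence $K^{\mathrm{lumped}}_n T_n^m$ is a polynomial of degree $\le m$ orthogonal to all of lower degree and must therefore be a scalar multiple of $T_n^m$ (or zero), so $K^{\mathrm{lumped}}_n T_n^m = \beta_{(m)} T_n^m$. Reading off $\beta_{(m)}$ as the $i^m$-coefficient ratio, and using that the leading coefficient of $\binom{i}{k}\binom{n-i}{d-k}$ in $i$ is $(-1)^{d-k}/(k!(d-k)!)$, gives
\[
\beta_{(m)} = \frac{1}{4^m}\sum_{k=0}^m (-1)^{m-k}\binom{m}{k}\binom{2k}{k}\binom{2(m-k)}{m-k}.
\]
For odd $m$, the substitution $k\leftrightarrow m-k$ yields $\beta_{(m)} = -\beta_{(m)} = 0$.

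For even $m=2k$, the identity $\beta_{(2k)} = \binom{2k}{k}^2/16^k = \beta_k$ is the main technical step. I would prove it via the constant-term manipulation
\[
\sum_{j}(-1)^{j}\binom{2k}{j}\binom{2j}{j}\binom{2(2k-j)}{2k-j} = [y^0 z^0]\bigl((z+z^{-1})^2 - (y+y^{-1})^2\bigr)^{2k},
\]
using $\binom{2j}{j} = [w^0](w+w^{-1})^{2j}$ and the binomial theorem; expanding $(z^2 + z^{-2} - y^2 - y^{-2})^{2k}$ multinomially and extracting the constant term forces the four multinomial exponents $(a,b,c,d)$ to satisfy $a=b$, $c=d$, $a+c=k$, producing $\binom{2k}{k}\sum_a \binom{k}{a}^2 = \binom{2k}{k}^2$ by the Vandermonde identity. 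Finally, the multiplicity and basis assertions follow since $\{T_n^m\}_{m=0}^n$ consists of $n+1$ linearly independent polynomials spanning all functions on $\{0, 1, \ldots, n\}$, the $\beta_k$ are strictly decreasing in $k$ (hence distinct nonzero eigenvalues of multiplicity one), and the remaining $\lceil n/2 \rceil$ odd-degree $T_n^m$ all lie in the zero eigenspace.
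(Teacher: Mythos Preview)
The paper does not prove this proposition at all; it is quoted verbatim from \cite{diaconiszhong} and used as a black box. Your proposal, by contrast, is a complete self-contained argument built on the paper's own Propositions~\ref{closedformk} and~\ref{restricttocoordinates}, and it is correct. The logic---degree preservation plus self-adjointness forces the eigenbasis to be the orthogonal polynomials, and the eigenvalue is read off as a leading-coefficient ratio---is clean and standard; the computation of $\mathbb{E}[\binom{|y|}{d}]$ via the restriction property and the closed form for $K_d(x_S,\underline{1})$ checks out, as does the alternating-sum identity via the constant-term extraction from $(z^2+z^{-2}-y^2-y^{-2})^{2k}$ (the signs $(-1)^{c+d}=(-1)^{2c}$ indeed all collapse to $+1$, and the multinomial sum reduces to $\binom{2k}{k}\sum_a\binom{k}{a}^2$ by Vandermonde).

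One tiny overstatement: you say the polynomial $\mathbb{E}_{|x|=i}[\binom{|y|}{d}]$ has degree \emph{exactly} $d$, but for odd $d$ the $i^d$-coefficient you compute is precisely $\beta_{(d)}/d!=0$, so the degree actually drops. This is harmless---your argument only needs degree $\le d$ to conclude that $K^{\mathrm{lumped}}_n$ preserves the filtration by degree---but it is worth saying ``at most $d$'' there.
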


Here, the discrete Chebyshev polynomials are the orthogonal polynomials for the uniform distribution, where $T_n^m$ is the polynomial of degree $m$ (viewed as a vector by evaluating at the points $\{0, 1, \cdots, n\}$). This result is enough to prove (\cite{diaconiszhong}, Theorem 1) that for the starting states $\underline{0}$ or $\underline{1}$, a finite number of steps are necessary and sufficient for convergence. However, if we begin the binary Burnside chain at any other starting point, the starting distribution is not uniform within cycles and thus it may take more steps before convergence to stationarity than in the orbit chain. Thus, we would like to find the eigenvectors and eigenvalues of the Markov chain on the full state space.

Our first step for diagonalizing the full chain is to write out the highest-degree Chebyshev polynomial in an explicit form:

\begin{proposition}\label{finitedifferences}
The discrete Chebyshev polynomial $T_n^n$ on $\{0, 1, \cdots, n\}$ of degree $n$ satisfies $T_n^n(i) = (-1)^i \binom{n}{i}$ for all $i \in \{0, 1, \cdots, n\}$.
\end{proposition}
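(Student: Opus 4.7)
The plan is to identify $T_n^n$ through its defining orthogonality property: among all real-valued functions on $\{0, 1, \ldots, n\}$, the polynomial $T_n^n$ is (up to scaling) the unique one of degree exactly $n$ that is orthogonal, under the uniform measure, to every polynomial of lower degree. I would therefore aim to show that the function $f(i) := (-1)^i \binom{n}{i}$ satisfies this orthogonality condition and then pin down the normalization.

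The first step rests on the standard forward difference identity
\[
    \Delta^n g(0) = \sum_{i=0}^n (-1)^{n-i} \binom{n}{i} g(i),
\]
combined with the observation that $\Delta$ lowers the degree of any polynomial by one, so $\Delta^n p \equiv 0$ for every polynomial $p$ with $\deg p < n$. Applying the identity with $g = p$ and multiplying by $(-1)^n$ gives $\sum_{i=0}^n (-1)^i \binom{n}{i} p(i) = 0$, which is precisely $\langle f, p \rangle = 0$ under the uniform measure on $\{0, \ldots, n\}$.

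Next I would verify that the unique polynomial interpolant of $f$ on these $n+1$ nodes actually has degree $n$, not lower. The leading coefficient of that interpolant equals $\frac{1}{n!}\Delta^n f(0)$, and a short computation using the Vandermonde-type identity $\sum_{i=0}^n \binom{n}{i}^2 = \binom{2n}{n}$ yields
\[
    \frac{1}{n!}\Delta^n f(0) = \frac{(-1)^n}{n!}\binom{2n}{n} \ne 0.
\]
Together with the orthogonality just established, this forces $f$ to be a nonzero scalar multiple of $T_n^n$ inside the $(n+1)$-dimensional space of polynomials of degree at most $n$ on $\{0, \ldots, n\}$.

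The only remaining step, and the one I anticipate being the main bookkeeping obstacle, is to confirm that this proportionality constant equals exactly $1$ under the normalization of $T_n^n$ used in \cite{diaconiszhong}. Concretely, one compares either the leading coefficient of $T_n^n$ or a single convenient boundary value (for example, $T_n^n(0)$ against $f(0) = 1$) with that reference's convention; with that match in place, the claimed identity $T_n^n(i) = (-1)^i \binom{n}{i}$ follows.
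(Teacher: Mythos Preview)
Your proposal is correct and follows essentially the same route as the paper: both arguments use the finite-difference identity to establish that $(-1)^i\binom{n}{i}$ is orthogonal to all lower-degree polynomials, and both fix the scalar by matching the value at $i=0$. The only place the paper is more explicit is the normalization step you flag as a bookkeeping obstacle: it uses the three-term recurrence (from \cite{ismail}) to verify directly that $T_n^j(0)=1$ for every $j$, which immediately matches $f(0)=1$ and closes the argument.
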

\begin{proof}
The discrete Chebyshev polynomials satisfy the recurrence relation (plugging in $\alpha = \beta = 0$ into \cite[Eq. (6.2.8)]{ismail})
\[
    T_n^0(x) = 1, \quad T_n^1(x) = \frac{n-2x}{n}, \quad (j+1) (n-j) T_n^{j+1}(x) = (2j+1)(n-2x)T_n^j(x) - j(j+n+1)T_n^{j-1}(x)
\]
for all $1 \le j \le n-1$. Since $(j+1)(n-j) = (2j+1)n - j(j+n+1)$, the constant term of $T_n^j$ is $1$ for any $j$. Also, since the $T_n^m$s are orthogonal with respect to the uniform distribution on $\{0, 1, \cdots, n\}$, we have $\sum_{i = 0}^n T_n^n(i) T_n^m(i) = 0$ for all $m < n$ and therefore
\[
    \sum_{i = 0}^n T_n^n(i) f(i) = 0 \quad \text{for all polynomials }f\text{ of degree at most }(n-1).
\]
But for any such $f$, the $n$th finite difference of $f$ started at $0$ is exactly $\binom{n}{0} f(0) - \binom{n}{1} f(1) + \binom{n}{2} f(2) - \cdots$, and so setting $T_n^n(i) = (-1)^i \binom{n}{i}$ satisfies orthogonality and also that the constant term is $1$. Since $T_n^n$ is uniquely defined by the recurrence relation (and we can perform Lagrange interpolation through these $(n+1)$ points to get a polynomial of degree at most $n$), $(-1)^i \binom{n}{i}$ must be the desired polynomial.
\end{proof}

We now combine this with \cref{lumpedeigenvalues} to construct explicit expressions for our eigenvectors.

\begin{proof}[Proof of \cref{eigenvectors}]
Let $m = 2k$ be any even integer less than or equal to $n$. First, we claim that there is a (right) eigenvector of eigenvalue $\beta_k$ for the binary Burnside process on $C_2^m$ of the form
\[
    g_m(x) = T_m^m(|x|) = (-1)^{|x|} \binom{m}{|x|}
\]
(last equality by \cref{finitedifferences}). Indeed, for any $x$, we have (recall that $K_m$ denotes the transition matrix for the process on $C_2^m$)
\begin{align}\label{lumpedtounlumped}
    K_{m} g_m(x) &= \sum_{y \in C_2^{m}} g_m(y) K_{m}(x, y)\nonumber \\
    &= \sum_{i=0}^{m} T_m^{m}(i) \sum_{y: |y| = i} K_{m}(x, y)\nonumber \\ 
    &= \sum_{i=0}^{m} T_m^{m}(i) K_{m}^{\text{lumped}}(|x|, i), 
\end{align}
and by the eigenvalue equation on the lumped chain, this last expression is $\beta_k T_m^{m}(|x|) = \beta_k g_m(x)$, as desired. 

Next, choosing any size-$m$ subset $S \in \binom{[n]}{m}$ of the coordinates, we can lift $g_m$ to a function on $C_2^n$ by defining
\[
    f_S(x) = g_m(x_S) = (-1)^{|x_S|} \binom{m}{|x_S|},
\]
where $x_S$ is the restriction of $x$ to the coordinate set $S$. We claim this is a (right) eigenvector of $K_n$. Indeed, for any $x \in C_2^n$, we have
\begin{align*}
    K_nf_S(x) &= \sum_{y \in C_2^n} f_S(y) K_n(x, y) \\
    &= \sum_{y_S \in C_2^{m}} \sum_{y_{\overline{S}} \in C_2^{n - m}} (-1)^{|y_S|} \binom{m}{|y_S|} K_n(x, y) \\
    &= \sum_{y_S \in C_2^{m}} (-1)^{|y_S|} \binom{m}{|y_S|} K_{m}(x_S, y_S)
\end{align*}
since by \cref{restricttocoordinates} $K_n$ lumps to $K_{m}$ when only restricted to the coordinate set $S$. And now this last expression is exactly $\sum_{y_S} g_m (y_S) K_{m}(x_S, y_S)$, so by the eigenvalue equation it evaluates to $\beta_k g_m(x_S) = \beta_k f_S(x)$, as desired.

\medskip

This means that for each of the $\binom{n}{m}$ subsets $S$, we get an eigenvector $f_S$ for $K_n$ of eigenvalue $\beta_k$. Now we prove that $\{f_S: S \in \binom{[n]}{m}\}$ is a linearly independent set for each fixed $m$ via the following steps:

\begin{enumerate}
    \item \textit{\underline{Rephrasing the problem.}} The $\binom{n}{m}$ eigenvectors $f_S$ may be written in a $\binom{n}{m}$ by $2^n$ matrix $M$, so that the rows are indexed by subsets $S \in \binom{[n]}{m}$ and the columns are indexed by states $x \in C_2^n$. In other words, define
    \[
        M = (M_{S, x})_{S \in \binom{[n]}{m}, x \in C_2^n}, \quad M_{S, x} = (-1)^{|x_S|} \binom{m}{|x_S|}, 
    \]
    where $|x_S|$ again denotes the number of ones in $x$ among the coordinates $S$. We then wish to prove that the rows (eigenvectors) of $M$ are linearly independent, or equivalently that the matrix has full column rank. 
    \item \textit{\underline{Using symmetry.}} To prove that the matrix has full column rank, it suffices to show that some linear combination of the column vectors is $1$ in the entry for the subset $S = \{1, 2, \cdots, m\}$ and $0$ in all others. (Then by permuting the role of $1, 2, \cdots, n$ in the coefficients of those vectors, we can get all other subsets $S$ as well.) Fixing notation, let $v_x$ be the column vector of $M$ corresponding to the state $x \in C_2^n$.
    \item \textit{\underline{Constructing nice linear combinations.}} Let $d = \min(m, n-m)$. For all integers $0 \le a, b \le d$, consider the linear combinations of vectors
    \[
        \mathbf{v}_a = \sum_{\substack{\text{states }x \text{ with }a \text{ ones in the first } \\m \text{ coordinates and no other ones}}} v_x
    \]
    and the sets of subsets
    \[
        \mc{S}_b = \left\{S \in \binom{[n]}{m}: \Big|S \cap \{m+1, \cdots, n\}\Big|  = b\right\}.
    \]
    We have $\binom{[n]}{m} = \bigcup_{b=0}^d \mc{S}_b$. Now we claim each $\mathbf{v}_a$ is constant on each $\mc{S}_b$ and that the value it takes on any $S \in \mc{S}_b$ is $n_{ab} = \sum_{r = 0}^a (-1)^r \binom{m}{r} \binom{m-b}{r} \binom{b}{a-r}$. Indeed, among all states $x$ with $a$ ones in the first $m$ coordinates (and no other ones), there are $\binom{m-b}{r} \binom{b}{a-r}$ of them with $|x_S| = r$ (because $m-b$ of the numbers $\{1, \cdots, m\}$ are in each $S \in \mc{S}_b$), and the value of $v_x$ at $S$ for each of them is $(-1)^r \binom{m}{r}$. 

    Therefore, each of $\mathbf{v}_0, \cdots, \mathbf{v}_d$ is encoded by its $(d+1)$ values on $\mc{S}_0, \cdots, \mc{S}_d$, and we would like to show that some linear combination of $\mathbf{v}_0, \cdots, \mathbf{v}_d$ yields $1$ on $\mc{S}_0 = \{\{1, \cdots, m\}\}$ and $0$ on all other $\mc{S}_j$s. To do this, it suffices to prove that the $(d+1) \times (d+1)$ matrix 
    \[
        N = (n_{ab})_{a,b=0}^d, \quad n_{ab} = \sum_{r = 0}^a (-1)^r \binom{m}{r} \binom{m-b}{r} \binom{b}{a-r}
    \]
    is invertible.
    \item \textit{\underline{Making use of the binomial coefficients.}} We now show that row $a$ of the matrix $N$ is a degree $a$ polynomial; more precisely, there is some degree $a$ polynomial $f_a$ such that $n_{ab} = f_a(b)$ for all $b$. Indeed, for any fixed $a$, $\binom{m-b}{r}$ is a polynomial of degree $r$ in $b$, and $\binom{b}{a-r}$ is a polynomial of degree $(a-r)$ in $b$, so their product is a polynomial of degree $a$ and thus the whole expression is a polynomial. Furthermore, summing over all $r$, the total coefficient of $b^a$ in $n_{ab}$ is
    \[
        c_a = \sum_{r =0}^a (-1)^r \binom{m}{r} \frac{(-1)^r}{r!} \frac{1}{(a-r)!} = \frac{1}{a!} \sum_{r =0}^a \binom{m}{r} \binom{a}{a-r} = \frac{1}{a!} \binom{m+a}{a}.
    \]
    In particular, the coefficient of $b^a$ is nonzero and so the polynomial is indeed of degree $a$. Therefore by row reduction (subtracting off earlier rows from later ones to remove lower-order terms), the determinant of $N$ is equal to the determinant of the matrix $(c_a b^a)_{a, b = 0}^d$, which is a nonzero constant times a nonzero Vandermonde determinant, hence nonzero. This proves that we can indeed find a valid linear combination to get the desired column vector, completing the proof of linear independence.
\end{enumerate}

Thus, we have constructed a $\binom{n}{m}$-dimensional eigenspace of eigenvalue $\beta_k$ for any $0 \le k \le \frac{n}{2}$. Since eigenvectors of different eigenvalues are orthogonal, the span of these eigenspaces has dimension $\sum_{m\text{ even}} \binom{n}{m} = 2^{n-1}$. Furthermore, by construction, notice that any function $f$ in this span satisfies $f(x) = f(\overline{x})$ for all $x \in C_2^n$, where $\overline{x}$ flips each coordinate of $x$ from $0$ to $1$ or vice versa. This means that all of these eigenvectors are orthogonal to the $2^{n-1}$-dimensional space of functions 
\[
    \left\{h \in L(C_2^n): h(x) = -h(\overline{x}) \text{ for all }x \in C_2^n\right\}.
\]
Any such $h$ is an eigenvector of $K$ of eigenvalue zero, because using that $K(x, y) = K(x, \overline{y})$ for any $x, y$, we have
\[
    Kh(x) = \sum_{y \in C_2^n} h(y) K(x, y) = \sum_{y \in C_2^n} h(\overline{y}) K(x, \overline{y}) = -\sum_{y \in C_2^n} h(y) K(x, y) = -Kh(x) \implies Kh(x) = 0.
\]
So the sum of the dimensions of the constructed spaces is $2^{n-1} + 2^{n-1} = 2^n$, meaning we have constructed a basis of $C_2^n$ of eigenvectors with the stated multiplicities, completing the proof.
\end{proof}

As mentioned in the introduction, the eigenvectors $f_S$ in our proof of \cref{eigenvectors} are not orthogonal, and in fact we can describe their inner products. First of all, we have $\langle f_S, f_{S'} \rangle = 0$ for any two sets $S$ and $S'$ with $|S| \ne |S'|$ even, but this does not hold if $|S| = |S'|$. In the latter case, denote $|S| = |S'| = m$ and $|S \cap S'| = \ell$. Under the inner product $\langle f, g \rangle = \sum_x f(x) g(x) \pi(x)$ with $\pi(x) = \frac{1}{n+1} \frac{1}{\binom{n}{|x|}}$, we have
\begin{align}\label{innerproductsum}
\langle f_S, f_{S'} \rangle &=\sum_x \pi(x) f_S(x) f_S'(x) \nonumber\\ 
&= \sum_{a=0}^\ell \sum_{b=0}^{m-\ell} \sum_{c=0}^{m-\ell} \sum_{d=0}^{n-2m+\ell} \binom{\ell}{a} \binom{m-\ell}{b} \binom{m-\ell}{c} \binom{n-2m+\ell}{d} \frac{1}{n+1} \frac{1}{\binom{n}{a+b+c+d}} (-1)^{b+c} \binom{m}{a+b} \binom{m}{a+c},
\end{align}
where $a, b, c, d$ respectively represent the number of coordinates among the sets $S \cap S'$, $S \setminus S'$, $S' \setminus S$, and $(S \cup S')^c$ where $x$ has a $1$.

\begin{proposition}\label{innerproductwz}
For any even integer $m$, any $n \ge m$, and any two subsets $S, S' \subseteq \binom{[n]}{m}$ with $|S \cap S'| = \ell$, let $\overline{f_S} = \frac{f_S}{\langle f_S, f_S \rangle^{1/2}}$ and $\overline{f_{S'}} = \frac{f_S'}{\langle f_S', f_S' \rangle^{1/2}}$ be the $\ell^2(\pi)$-normalized vectors for $S$ and $S'$. Then
\[
    \langle \overline{f_S}, \overline{f_{S'}} \rangle = \frac{1}{\binom{2m+1-\ell}{m+1}}.
\]
Furthermore, for any even integer $m \le n$ and any $S \subseteq \binom{[n]}{m}$,
\[
    \langle f_S, f_S \rangle = \frac{1}{m+1} \binom{2m}{m}.
\]
\end{proposition}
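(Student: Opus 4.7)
The plan is to handle the norm by lumping $f_S$ down to the first $m$ coordinates and using discrete Chebyshev orthogonality, and to handle the cross-term by a P\'olya urn / de Finetti mixture representation of $\pi$. After reducing the cross-term to a single integral over a uniform parameter, the resulting inner hypergeometric sum can be identified with a Jacobi polynomial, Jacobi orthogonality evaluates each slice of the integral, and Vandermonde's convolution collapses the remaining sum.

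Since $f_S(x)=T_m^m(|x_S|)$ depends only on the $m$ coordinates in $S$, and the marginal of $\pi_n$ on any $m$-coordinate subset is $\pi_m$ (by direct calculation, or by interpreting $\pi_n$ as arising from P\'olya's urn), the norm collapses:
\[\langle f_S,f_S\rangle=\sum_{y\in C_2^m}\pi_m(y)\,T_m^m(|y|)^2=\frac{1}{m+1}\sum_{i=0}^m T_m^m(i)^2=\frac{1}{m+1}\binom{2m}{m},\]
using \cref{finitedifferences} and $\sum_i\binom{m}{i}^2=\binom{2m}{m}$. The same marginal argument reduces $\langle f_S,f_{S'}\rangle$ to a sum over $C_2^N$ with $N=|S\cup S'|=2m-\ell$. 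I would write $\pi_N$ as a de Finetti mixture $\pi_N(y)=\int_0^1 p^{|y|}(1-p)^{N-|y|}\,dp$ and partition $S\cup S'$ into $S\cap S'$, $S\setminus S'$, $S'\setminus S$ of sizes $\ell, m-\ell, m-\ell$; under each Bernoulli$(p)$ measure the corresponding coordinate sums $A,B,C$ are independent Binomials with $|y_S|=A+B$ and $|y_{S'}|=A+C$. Using $(-1)^{2A}=1$ and the conditional independence of $B$ and $C$ given $A$,
\[\langle f_S,f_{S'}\rangle=\int_0^1\sum_{a=0}^\ell\binom{\ell}{a}p^a(1-p)^{\ell-a}\,R_p(a)^2\,dp,\qquad R_p(a)=\sum_b\binom{m-\ell}{b}(-p)^b(1-p)^{m-\ell-b}\binom{m}{a+b}.\]

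The next step is to recognize $R_p(a)$ as a Jacobi polynomial. Rewriting $R_p(a)/\binom{m}{a}$ as a Gauss hypergeometric ${}_2F_1$ and applying Pfaff's transformation yields
\[R_p(a)=\binom{m}{a}\,{}_2F_1\!\bigl(-(m-\ell),\,m+1;\,a+1;\,p\bigr)=\frac{\binom{m}{a}}{\binom{m-\ell+a}{m-\ell}}\,P_{m-\ell}^{(a,\ell-a)}(1-2p).\]
The measure $p^a(1-p)^{\ell-a}\,dp$ is exactly the Jacobi weight for parameters $(a,\ell-a)$, so the classical Jacobi $L^2$-norm formula evaluates each $a$-slice of the integral to $(m-\ell+a)!(m-a)!/((2m-\ell+1)(m-\ell)!\,m!)$. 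Substituting and cancelling reduces the $a$-summand to a global constant times $\binom{m}{a}\binom{m}{\ell-a}$, whence Vandermonde's convolution $\sum_a\binom{m}{a}\binom{m}{\ell-a}=\binom{2m}{\ell}$ finishes the sum. Routine factorial arithmetic yields $\langle f_S,f_{S'}\rangle=(2m)!(m-\ell)!/(m!(2m+1-\ell)!)$, which rearranges to $\binom{2m}{m}/((m+1)\binom{2m+1-\ell}{m+1})$; dividing by the norm gives the claimed value $1/\binom{2m+1-\ell}{m+1}$.

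The main obstacle is spotting the Pfaff transformation that turns $R_p(a)$ into a Jacobi polynomial; once that connection is made, everything else is classical. A purely mechanical alternative is to feed the resulting finite triple sum directly into Zeilberger's algorithm, which will produce a Wilf--Zeilberger certificate for the identity and is presumably what accounts for the ``wz'' in the proposition's label.
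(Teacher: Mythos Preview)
Your proof is correct and takes a genuinely different route from the paper's. The paper proves the norm formula by starting from the quadruple sum \cref{innerproductsum}, applying a separate binomial lemma (\cref{binomcoeffcalculation}) to do the $d$-sum, and then collapsing the remaining sum by Vandermonde; for the cross-term it uses Koutschan's \texttt{HolonomicFunctions} package to produce WZ certificates showing the sum is independent of two of its parameters, then evaluates at a trivial boundary. In contrast, you exploit the marginal property of $\pi_n$ and its de Finetti mixture representation to reduce to a single integral, then recognize $R_p(a)$ as (up to normalization) the Jacobi polynomial $P_{m-\ell}^{(a,\ell-a)}(1-2p)$ via Pfaff's transformation, so that the Jacobi $L^2$-norm formula and Vandermonde's convolution finish the computation by hand. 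Your argument is conceptually more transparent and explains \emph{why} the answer has the clean form it does; the paper's approach is mechanical but has the advantage of being fully certificate-checkable and requiring no special-function insight. One minor point worth making explicit in your write-up: the Jacobi parameters $(a,\ell-a)$ are nonnegative integers for all $a$ in the sum, so the classical orthogonality relation applies without analytic continuation.
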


Given the nice form of these inner products, it is natural to hope for closed-form expressions for linear combinations of the $f_S$s which indeed yield an orthonormal basis. As mentioned in the introduction, we defer this procedure to the companion paper \cite{diaconislinram2}.

\cref{innerproductwz} is proved using the computer-assisted ``creative telescoping'' algorithm. The key ideas of this algorithm come from Wilf and Zeilberger's WZ method -- an overview can be found in \cite{wzmonthly} -- and subsequent work has been done to speed up the algorithm with various heuristics and a careful ansatz \cite{koutschan}. The Mathematica package \texttt{HolonomicFunctions} that we used, along with further literature references, may be found at \url{https://risc.jku.at/sw/holonomicfunctions/}.

We first show how to perform the simpler computation, the normalizing factor $\langle f_S, f_S \rangle$, without needing this machinery. (This illustrates the concept of ``showing that a certain quantity is independent of one of its parameters.'') To do this, we first note down a useful binomial coefficient computation: 

\begin{lemma}\label{binomcoeffcalculation}
For any $c_1, c_2, c_3 \in \ZZ_{\ge 0}$ with $c_1 \ge c_2$, we have the identity
\[
    \frac{1}{c_3+c_1+1}\sum_{i=0}^{c_3} \frac{\binom{c_3}{i}}{\binom{c_3+c_1}{i+c_2}} = \frac{1}{c_1+1} \frac{1}{ \binom{c_1}{c_2}} .
\]
\end{lemma}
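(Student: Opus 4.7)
The plan is to recognize the reciprocal binomial coefficient as a Beta integral, swap sum and integral, and use the binomial theorem to collapse the sum. Specifically, the identity
\[
    \frac{1}{\binom{N}{K}} = (N+1)\int_0^1 x^K (1-x)^{N-K}\,dx
\]
(valid whenever $0 \le K \le N$) lets us write each term $\binom{c_3+c_1}{i+c_2}^{-1}$ of the sum as an integral.

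Applying this with $N = c_3+c_1$ and $K = i + c_2$, the factor $c_3+c_1+1$ in front cancels against the one produced by the Beta-integral identity, so the left-hand side becomes
\[
    \int_0^1 x^{c_2}(1-x)^{c_1-c_2} \left( \sum_{i=0}^{c_3} \binom{c_3}{i} x^i (1-x)^{c_3-i} \right) dx.
\]
The inner sum is $(x + (1-x))^{c_3} = 1$ by the binomial theorem, leaving
\[
    \int_0^1 x^{c_2}(1-x)^{c_1-c_2}\,dx = B(c_2+1,\, c_1-c_2+1) = \frac{c_2!\,(c_1-c_2)!}{(c_1+1)!} = \frac{1}{(c_1+1)\binom{c_1}{c_2}},
\]
which is the desired right-hand side. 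The hypothesis $c_1 \ge c_2$ ensures that the binomial coefficients and Beta integral exponents involved are all nonnegative.

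There is essentially no obstacle here; the only mildly delicate point is confirming that the ranges of the indices make each application of the Beta-integral formula valid (which requires $i + c_2 \le c_3 + c_1$, guaranteed by $i \le c_3$ and $c_2 \le c_1$). If one prefers an approach avoiding analysis, the identity can alternatively be established by induction on $c_3$, using the Pascal-type recursion $\binom{c_3}{i} = \binom{c_3-1}{i} + \binom{c_3-1}{i-1}$ together with the relation $\binom{c_3+c_1}{i+c_2} = \binom{c_3+c_1-1}{i+c_2} + \binom{c_3+c_1-1}{i+c_2-1}$; however, the Beta-integral argument above is both shorter and more illuminating.
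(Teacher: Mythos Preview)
Your proof is correct and takes a genuinely different route from the paper's. The paper proceeds purely combinatorially: it multiplies the summand by $\binom{c_1}{c_2}$, rewrites the resulting ratio of binomials in factorial form, and then recognizes the sum as an instance of the Vandermonde-type convolution $\sum_{i=0}^a \binom{a-i}{b}\binom{c+i}{d} = \binom{a+c+1}{b+d+1}$, from which the result drops out after simplification. Your Beta-integral approach instead trades the reciprocal binomial for an integral, so the sum collapses immediately via the binomial theorem with no need to invoke (or recall) that particular convolution identity. Your argument is shorter and arguably more transparent about \emph{why} the answer is independent of $c_3$; the paper's argument has the virtue of staying entirely within discrete combinatorics, which matches the flavor of the surrounding material. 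Either way the verification of index ranges (which you noted) is the only point requiring care, and you handled it correctly.
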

\begin{proof}
We have
\begin{align*}
    \sum_{i=0}^{c_3} \frac{\binom{c_3}{i} \binom{c_1}{c_2}}{\binom{c_3+c_1}{i+c_2}} &= \sum_{i=0}^{c_3} \frac{c_3!c_1!}{(c_3+c_1)!} \cdot \frac{(i+c_2)!}{i! c_2!} \cdot \frac{(c_3-i+c_1-c_2)!}{(c_3-i)!(c_1-c_2)!} \\
    &= \frac{1}{\binom{c_1+c_3}{c_1}} \sum_{i=0}^{c_3} \binom{c_2+i}{c_2} \binom{c_3 + c_1 - (c_2+i)}{c_1 - c_2} \\
    &= \frac{1}{\binom{c_1+c_3}{c_1}} \cdot \binom{c_3+c_1+1}{c_1+1} \\
    &= \frac{c_3 + c_1 + 1}{c_1+1},
\end{align*}
where between the second and third lines we use the identity $\sum_{i=0}^a \binom{a-i}{b} \binom{c+i}{d} = \binom{a+c+1}{b+d+1}$. Now rearranging the equality between the first and last expression yields the result.
\end{proof}

\begin{proof}[Proof of the formula for $\langle f_S, f_S \rangle$ in \cref{innerproductwz}]
We are considering the case $\ell = m$ of \cref{innerproductsum}, so that our computation simplifies to
\begin{align*}
    \langle f_S, f_S \rangle &= \sum_{a=0}^m \sum_{d=0}^{n-m} \binom{m}{a} \binom{n-m}{d} \frac{1}{n+1} \frac{1}{\binom{n}{a+d}} \binom{m}{a}^2 \\
    &= \sum_{a=0}^m \binom{m}{a}^3 \cdot \frac{1}{n+1}\sum_{d=0}^{n-m}  \frac{\binom{n-m}{d}}{\binom{n}{a+d}}.
\end{align*}
Then using \cref{binomcoeffcalculation} with $c_3 = n-m$, $c_1 = m$, and $c_2 = a$ yields
\begin{align*}
    \langle f_S, f_S \rangle &= \sum_{a=0}^m \binom{m}{a}^3 \frac{1}{m+1} \cdot \frac{1}{\binom{m}{a}} \\
    &= \frac{1}{m+1}\sum_{a=0}^m \binom{m}{a}^2 \\
    &= \frac{1}{m+1} \binom{2m}{m},
\end{align*}
as desired.
\end{proof}

We now turn to the formula for $\langle \overline{f_S}, \overline{f_{S'}} \rangle$. While plugging \cref{binomcoeffcalculation} into \cref{innerproductsum} does prove that $\langle f_S, f_{S'} \rangle$ is independent of $n$ (since it allows us to perform the sum over the index $d$), computing the remaining sum by hand is a much harder task. Instead, the following argument due to Laurent Bartholdi (using Christoph Koutschan's \texttt{HolonomicFunctions} package) may be employed:

\begin{proof}[Proof of the formula for $\langle \overline{f_S}, \overline{f_{S'}} \rangle$ in \cref{innerproductwz}]
Since the quadruple sum in \cref{innerproductsum} is independent of $n$ by \cref{binomcoeffcalculation}, we select $n = 2m - \ell$ and rewrite $\ell = m-s$. Rearranging, we thus have that
\begin{align*}
    &\binom{2m+1-\ell}{m+1} \langle \overline{f_S}, \overline{f_{S'}} \rangle \\
    &= \sum_{a=0}^{m-s} \sum_{b=0}^{s} \sum_{c=0}^{s} \binom{m-s}{a} \binom{s}{b} \binom{s}{c} (-1)^{b+c} \binom{m}{a+b} \binom{m}{a+c} \frac{(a+b+c)!(m+s-a-b-c)!m!}{s!(2m)!},
\end{align*}
and we wish to prove that this quantity is identically $1$ for all $m$ and $s$. Write the summand as $H(m, s, a, b, c)$; our goal is to prove that $\overline{H}(m, s) = \sum_{a, b, c} H(m, s, a, b, c) = 1$.

When $m = s = 0$, we can compute directly that $\overline{H}(0, 0) = 1$. With the assistance of the Mathematica program \texttt{HolonomicFunctions} by Christoph Koutschan (which may be found at the following \href{https://www3.risc.jku.at/research/combinat/software/ergosum/RISC/HolonomicFunctions.html}{link}), we can produce rational expressions $F_A, F_B, F_C, G_A, G_B, G_C$ so that 
\[
    H(m+1) - H(m) = F_A(a+1) - F_A(a) + F_B(b+1) - F_B(b) + F_C(c+1) - F_C(c),
\]
\[
    H(s+1) - H(s) = G_A(a+1) - G_A(a) + G_B(b+1) - G_B(b) + G_C(c+1) - G_C(c),
\]
where in both of these expressions all functions implicitly depend on the remaining parameters. But this means that the sum $\sum_{a, b, c} H(m+1) - H(m)$ telescopes (and in fact evaluates to zero, since $F_A(0) = \lim_{x \to \infty} F_A(x) = 0$ and similar for $F_B, F_C$), meaning that in fact $\overline{H}(m, s)$ is independent of $m$. Similarly the sum $\sum_{a, b, c} H(s+1) - H(s)$ telescopes and thus $\overline{H}(m, s)$ is independent of $s$, meaning that $\overline{H}(m, s) = 1$ for all $m, s$ as desired. 
\end{proof}

We list the ``certificates'' $F_A, F_B, F_C, G_A, G_B, G_C$ below. In all cases, $H = H(m, s, a, b, c)$ is the summand defined above. 
\[
    \scalebox{.7}{$\displaystyle\frac{F_A}{H} = \frac{\splitfrac{2 a b c - 2 a^2 b c + 4 a b c m - 2 a^2 b c m + 2 a b c m^2 + a^2 s - a^3 s - 2 a b s + a^2 b s - 2 a c s + a^2 c s - 2 a b c s + 2 a^2 m s - a^3 m s - 4 a b m s + a^2 b m s - 4 a c m s}{+ a^2 c m s - 2 a b c m s + a^2 m^2 s - 2 a b m^2 s - 2 a c m^2 s + 3 a s^2 - 2 a^2 s^2 + a b s^2 + a c s^2 + 6 a m s^2 - 2 a^2 m s^2 + a b m s^2 + a c m s^2 + 3 a m^2 s^2 - a s^3 - a m s^3}}{2 (-1 + a + b - m) (-1 + a + c - m) (1 + 2 m) s (-1 + a - m + s)}$},
\]
\[
    \scalebox{.8}{$\displaystyle\frac{F_B}{H} = \frac{\splitfrac{-2 a b + 2 a^2 b + 2 a b^2 + 2 a b c + 2 b m - 8 a b m + 4 a^2 b m - 2 b^2 m + 4 a b^2 m - 2 b c m + 4 a b c m + 2 b^2 c m + 4 b m^2}{- 6 a b m^2 - 2 b^2 m^2 - 2 b c m^2 + 2 b m^3 - b s - a b s + b^2 s + b c s - 2 b m s - a b m s + b^2 m s + b c m s - b m^2 s - b s^2 - b m s^2}}{2 (-1 + a + b - m) (-1 + a + c - m) (1 + 2 m) s}$},
\]
\[
    \scalebox{.8}{$\displaystyle\frac{F_C}{H} = \frac{2 a b c - 2 b c m + 4 a b c m + 2 b c^2 m - 2 b c m^2 + c s - 3 a c s + b c s - c^2 s + 4 c m s - 5 a c m s + b c m s - 3 c^2 m s + 3 c m^2 s - c s^2 - c m s^2}{2 (1 + a + b) (-1 + a + c - m) (1 + 2 m) s}$},
\]
\[
    \frac{G_A}{H} = \frac{a}{m-s},
\]
\[
    \frac{G_B}{H} = \frac{a b - a^2 b - a b^2 - a b c - b m + 2 a b m + b^2 m + b c m - b m^2 + b s - b^2 s - b c s + b s^2}{(-1 + b - s) (1 - c + s) (-m + s)},
\]
\[
    \frac{G_C}{H} = \frac{a^2 c + a b c - 2 a c m - b c m + c m^2 + a c s + b c s - c m s}{(1 + a + b) (m - s) (1 - c + s)}.
\]

\medskip

\begin{remark}
The Gram matrix of \cref{innerproductwz} is so neat that it is natural to try to understand it directly. Richard Stanley points out that matrices indexed by the size-$m$ subsets of $[n]$ with $(S, S')$ entry given by $g(|S \cap S'|)$ (for any function $g$) are elements of the adjacency algebra of the Johnson scheme. Such matrices can be explicitly diagonalized with dual Hahn polynomial eigenfunctions. A clear, elementary account can be found in \cite{johnsonscheme}; in particular, Theorem 1.1 in that paper shows that the eigenvalues of our Gram matrix are the rational numbers
\[
    \lambda_t = \sum_{\ell = 0}^m g(\ell) \sum_{i=0}^t (-1)^{t-i} \binom{m-i}{\ell-i} \binom{n-m+i-t}{m-\ell+i-t} \binom{t}{i}, \quad 0 \le t \le m,
\]
where $g(\ell) = \frac{1}{\binom{2m+1-\ell}{m+1}}$ are our inner products, and where the $\lambda_t$-eigenspace has dimension $\binom{n}{t} - \binom{n}{t-1}$.  

Thus from here, a different approach to orthonormalization is possible: if $\{v_i\}$ are independent column vectors with Gram matrix $G = (\langle v_i, v_j \rangle)_{ij}$, then an orthonormal basis is given by $u_i = \sum_j (G_{ji})^{-1/2} v_j$. (In matrix notation, we have $U = VG^{-1/2}$ where $U, V$ have columns $\{u_i\}$ and $\{v_i\}$.) However, the presence of $G^{-1/2}$ suggests a messy calculation and thus we have not tried to push this further.
\end{remark}

\section{Proof of \cref{chisquare}}\label{chisquareboundsection}

In this section, we compute the average chi-square distance and show that order $\frac{n}{\log n}$ steps are required to be close to stationarity. The argument uses properties of $\chi_{\text{avg}}^2(\ell)$, which we recall here.

As mentioned in the introduction, the average chi-square distance to stationarity after $\ell$ steps is given by
\[
    \chi_{\text{avg}}^2(\ell) = \sum_{x \in \mf{X}} \pi(x) \chi_x^2(\ell) =  \sum_{x, y \in \mf{X}} \left|\frac{K^\ell(x, y)}{\pi(y)} - 1\right|^2 \pi(x) \pi(y).
\]
We made use of eigenvalues and eigenvectors in \cref{analyticsubsection} to rewrite chi-square distance in terms of eigenvectors. Again letting $f_0, \cdots, f_{|\mf{X}| - 1}$ be an orthonormal basis of eigenvectors for $\ell^2(\pi)$, with $f_0$ corresponding to the trivial eigenvalue of $1$, we have
\begin{align}\label{chisquarebound}
    \chi_{\text{avg}}^2(\ell) &= \sum_{x \in \mf{X}} \pi(x) \sum_{i = 1}^{|\mf{X}| - 1} f_i(x)^2 \beta_i^{2\ell} \nonumber \\
    &= \sum_{i = 1}^{|\mf{X}| - 1} \beta_i^{2\ell} \sum_{x \in \mf{X}} \pi(x) f_i(x)^2 \nonumber  \\
    &= \sum_{i = 1}^{|\mf{X}| - 1} \beta_i^{2\ell}.
\end{align}
Thus, we may plug in our known eigenvalues and multiplicities directly and obtain estimates for various $\ell$. We will use this to first prove the results for average chi-square distance:

\begin{proof}[Proof of \cref{chisquare}, parts (1) and (2)]
For (1), we use the eigenvalue multiplicities from \cref{eigenvectors}, along with \cref{chisquarebound}, to get
\[
    \chi_{\text{avg}}^2(\ell) = \sum_{k = 1}^{\lfloor n/2 \rfloor} \binom{n}{2k} \beta_k^{2\ell},
\]
where $\beta_k = \frac{1}{2^{4k}} \binom{2k}{k}^2$ as in \cref{lumpedeigenvalues}. We will lower and upper bound this quantity to show that order $\frac{n}{\log n}$ steps are necessary and sufficient -- in fact, we will show a sharp cutoff at $\frac{\log 2}{2} \frac{n}{\log n}$.

\medskip

For (2), because of the bounds on the central binomial coefficient (\cite{stanica}, Theorem 2.5)
\begin{equation}\label{stanicabound}
    \frac{4^a}{\sqrt{\pi a}} \exp\left(-\frac{1}{8a}\right) <  \binom{2a}{a} < \frac{4^a}{\sqrt{\pi a}},
\end{equation}
we have the bounds on our eigenvalues
\[
    \frac{1}{\pi k} \exp\left(-\frac{1}{4k}\right) < \beta_k < \frac{1}{\pi k}.
\]
In particular, since we will only use the lower bound for $k$ of order $n$, $\exp\left(-\frac{1}{4k}\right)^{2\ell}$ is of constant order for the values of $\ell \sim \frac{n}{\log n}$ and $k$ that we will need. Thus it suffices to prove (2) with $\chi_{\text{avg}}^2(\ell)$ replaced by 
\[
    \overline{\chi}_{\text{avg}}^2(\ell) = \sum_{k=1}^{\lfloor n/2 \rfloor} \binom{n}{2k} \frac{1}{(\pi k)^{2\ell}}.
\]
For simplicity of notation, take $n$ to be a multiple of $4$; the general case can be handled analogously by offsetting some indices (for instance, using one of the middle two binomial coefficients instead of the central binomial coefficient). 

First, we show that for $\ell = (1 - \varepsilon) \frac{\log 2}{2} \frac{n}{\log n}$, we have $\overline{\chi}_{\text{avg}}^2(\ell) \to \infty$. Indeed, the middle term ($k = \frac{n}{4}$) of the summation has asymptotics
\begin{align*}
    \binom{n}{n/2} \frac{1}{(\frac{\pi}{2} n)^{2\ell}} &\sim \frac{2^n}{\sqrt{\pi n/2}} \exp\left(-(1 - \varepsilon) \log 2 \frac{n}{\log n} \log \left(\frac{\pi}{2} n\right)\right) \\
    &= \frac{2^{\varepsilon n} 2^{(\varepsilon - 1) \log \frac{\pi}{2} \frac{n}{\log n}}}{\sqrt{\pi n/2}},
\end{align*}
and this expression indeed diverges to $+\infty$ as $n \to \infty$. 

\medskip

For the upper bound, we will choose $2\ell = \left(1 + \frac{c}{\log n}\right) \frac{n \log 2}{\log (\frac{\pi}{4}n)}$, where $c = c_n$ is of order $1$ and is determined in \cref{l2cutoffestimate2}. (The result then follows if we show that the sum tends to zero for this choice of $\ell$.) If we parametrize $k = \frac{n}{4} + j$, meaning that $2k = \frac{n}{2} + 2j$ (and $-\frac{n}{4} < j < \frac{n}{4}$), then we have
\begin{align}\label{l2cutoffestimate1}
\frac{1}{(\pi k)^{2\ell}} &= e^{-2\ell \log(\pi k)} \nonumber \\
&= \exp\left(-\left(1 + \frac{c}{\log n}\right) \frac{n \log 2}{\log \left(\frac{\pi}{4}n\right)} \log \left(\pi\left(\frac{n}{4} + j\right)\right) \right) \nonumber \\
&= \exp\left(-\left(1 + \frac{c}{\log n}\right) \frac{n \log 2}{\log \left(\frac{\pi}{4}n\right)} \left(\log \left(\frac{\pi}{4} n\right) + \log\left(1 + \frac{4j}{n}\right)\right)\right) \nonumber \\
&= \exp\left(-\left[n \log 2 + \frac{n \log 2}{\log\left(\frac{\pi}{4}n\right)}\log\left(1 + \frac{4j}{n}\right) + \frac{c}{\log n} n \log 2 + \frac{c}{\log n}  \frac{n \log 2}{\log\left(\frac{\pi}{4}n\right)}\log\left(1 + \frac{4j}{n}\right)\right]\right).
\end{align}

This last expression in \cref{l2cutoffestimate1} must be multiplied by $\binom{n}{\frac{n}{2} + 2j}$ and then summed over $-\frac{n}{4} < j < \frac{n}{4}$, and we must show that the sum tends to zero. This is shown in zones:

\begin{itemize}
\item \textit{\underline{Zone 1: $0 \le j < \frac{n}{4}$}}. For these cases, we can bound the binomial coefficient crudely by $2^n$ (which cancels out the first term in the exponent of \cref{l2cutoffestimate1}). Using that $\frac{x}{1+x} < \log(1+x) < x$ for all $x > 0$ and plugging in $x = \frac{4j}{n}$ yields that $-\log\left(1 + \frac{4j}{n}\right) < -\frac{4j/n}{1 + 4j/n} < -\frac{2j}{n}$. Thus, the second and fourth terms in the exponent of \cref{l2cutoffestimate1} may be bounded via
\[
    -\frac{n \log 2}{\log\left(\frac{\pi}{4}n\right)}\log\left(1 + \frac{4j}{n}\right) < -\frac{(2 \log 2)j}{\log\left(\frac{\pi}{4} n\right)} \qquad \text{and }
\]
\[
    -\frac{c}{\log n}  \frac{n \log 2}{\log\left(\frac{\pi}{4}n\right)}\log\left(1 + \frac{4j}{n}\right) < -\frac{c}{\log n}  \frac{(2 \log 2)j}{\log\left(\frac{\pi}{4}n\right)}.
\]
Further observe that for any $A > 0$, we have
\[
    \sum_{j=0}^{\infty} e^{-Aj/\log n} = \frac{1}{1 -  e^{-A/\log n}} \sim \frac{\log n}{A},
\]
so that combining bounds together, the total contribution to $\overline{\chi}_{\text{avg}}^2(\ell)$ over $0 \le j < \frac{n}{4}$ is asymptotically bounded from above by
\[
    \frac{\log n}{2 \log 2} \exp\left(-\frac{cn \log 2}{\log n}\right),
\]
and (with the choice of $c$ below in \cref{l2cutoffestimate2}) this indeed goes to zero as $n \to \infty$.
\item \textit{\underline{Zone 2: $-\frac{n}{6} < j < 0$}}. Write $j' = -j$ for clarity. Because $-\log(1-y) < \frac{y}{1-y}$ for $0 < y < 1$, we have
\[
    -\log\left(1 - \frac{4j'}{n}\right) < \frac{4j'/n}{1 - 4j'/n} < \frac{12j'}{n}
\]
for all $j' < \frac{n}{6}$. If we write $\frac{n \log 2}{\log\left(\frac{\pi}{4} n\right)} \frac{12j'}{n} = \alpha j'$ (so $\alpha = \frac{12 \log 2}{\log\left(\frac{\pi}{4} n\right)})$, then
\[
    \sum_{j' = 0}^{\frac{n}{6} - 1} e^{\alpha j'} = \frac{e^{\alpha n/6} - 1}{e^{\alpha} - 1} \sim \frac{1}{\alpha}  e^{\alpha n/6} = \frac{\log\left(\frac{\pi}{4} n\right)}{12 \log 2} \exp\left(\frac{2n \log 2}{\log\left(\frac{\pi}{4} n\right)} \right).
\]
In this zone, we still bound all binomial coefficients crudely by $\binom{n}{2k} < 2^n$. Since the expression \cref{l2cutoffestimate1} has third term $-\frac{cn \log 2}{\log n}$ in the exponent, it follows that if
\begin{equation}\label{l2cutoffestimate2}
c = 2 + \frac{\log n}{n \log 2} \left(\log \log \left(\frac{\pi}{4} n\right) + \theta\right),
\end{equation}
then the total contribution to $\overline{\chi}_{\text{avg}}^2(\ell)$ over $-\frac{n}{6} < j < 0$ is bounded above by a constant times $e^{-\theta}$, which tends to zero by choosing $\theta = \theta_n$ increasing (but increasing slowly enough so that $c$ stays bounded).

\item \textit{\underline{Zone 3: $-\frac{n}{4} + n^{0.9} < j \le -\frac{n}{6}$}}. For this zone, it is important to bound the binomial coefficient. We have 
\begin{align*}
    \binom{n}{a} = \frac{n(n-1) \cdots (n-a+1)}{a!} &= \frac{n^a}{a!}\left(1 - \frac{1}{n}\right) \cdots \left(1 - \frac{a-1}{n}\right) \\
    &\le n^a \exp\left(-\frac{1}{n} \sum_{j=1}^{a-1} j\right) \\
    &\le \frac{n^a e^{-\binom{a}{2}/n}}{a!}.
\end{align*}
Thus for $a = \theta n$ for $0 < \theta < \frac{1}{2}$, we have by the above bound and Stirling's formula that
\begin{align*}
    \binom{n}{a} &\le \exp\left(a \log n - \frac{1}{2n} a^2 - a \log a + a - \frac{1}{2} \log a +  O(1)\right) \\
    &= \exp\left(-\theta n \log \theta - \frac{\theta^2}{2}n + \theta n - \frac{1}{2} \log(\theta n) + O(1)\right) \\
    &= \exp\left(n\left(\theta - \frac{\theta^2}{2} - \theta \log \theta\right) - \frac{1}{2} \log n + O(1)\right).
\end{align*}
In our expression for $\overline{\chi}_{\text{avg}}^2(\ell)$, this factor is being multiplied by $\frac{1}{\left(\frac{\pi a}{2}\right)^{2\ell}} = \exp\left(-2\ell \log \left(\frac{\pi}{2} \theta n\right)\right)$, so that if $2\ell = (1 + \varepsilon) \frac{n \log 2}{\log\left(\frac{\pi}{4} n\right)}$, then this factor is of the form 
\[
    \exp\left(-(1 + \varepsilon) \frac{n \log 2}{\log\left(\frac{\pi}{4} n\right)}\log \left(\frac{\pi}{2} \theta n\right)\right) = \exp\left(-(1 + \varepsilon) n \log 2  \left(1 + \frac{\log (2\theta)}{\log\left(\frac{\pi}{4} n\right)}\right)\right).
\]
Multiplying these exponentials together, the contribution to $\overline{\chi}_{\text{avg}}^2(\ell)$ has lead term in the exponent $n\left(\theta - \frac{\theta^2}{2} - \theta \log \theta - (1+\varepsilon) \log 2\right)$, and for $\theta < \frac{1}{6}$ this is bounded from above by $-0.2n$. Furthermore, the remaining positive factor in the exponential $\exp\left(-(1 + \varepsilon) n \log 2 \frac{\log (2\theta)}{\log\left(\frac{\pi}{4} n\right)}\right)$ is asymptotically bounded from above by $e^{0.15n}$ as long as $\theta \gtrsim n^{-0.1}$ and $\varepsilon < 1$. Thus in this zone each term is exponentially small and so the sum is also exponentially small.
\item \textit{\underline{Zone 4: $-\frac{n}{4} < j \le -\frac{n}{4} + n^{0.9}$}} (that is, $1 \le 2k \le 2n^{0.9}$). Here we can just bound $\frac{1}{\pi k} \le \frac{1}{\pi}$, so that in this zone we have
\begin{align*}
    \binom{n}{2k} \frac{1}{(\pi k)^{2\ell}} &\le n^{2k} \frac{1}{\pi^{2\ell}} \\
    &= \exp\left(2k \log n - 2\ell \log \pi\right) \\
    &\le \exp\left(2n^{0.9} \log n - \frac{n \log 2}{\log\left(\frac{\pi}{4} n\right)} \log \pi\right).
\end{align*}
Since $n^{0.9}$ times this quantity goes to zero as $n \to \infty$, the contribution to $\overline{\chi}_{\text{avg}}^2(\ell)$ in this zone also goes to zero.
\end{itemize}
Combining the bounds across the different zones yields the needed upper bound on $\overline{\chi}_{\text{avg}}^2(\ell)$ and hence $\chi_{\text{avg}}^2(\ell)$.

\medskip

Finally, we wish to upgrade the bounds above to per-state statements $\chi_x^2(\ell)$. The existence of $x^{(n)}$ such that $\chi_{x^{(n)}}(\ell)$ diverges after $(1 - \varepsilon) \frac{\log 2}{2} \frac{n}{\log n}$ steps is clear, because if $\chi_x^2(\ell)$ were bounded for all individual states, then the average could not diverge. For the other claim, observe that because $K$ and $\pi$ are invariant under the action of $S_n$, $\chi_x^2(\ell)$ is constant on each orbit $\mc{O}_i$, and $\sum_{x \in \mc{O}_i} \pi(x) = \frac{1}{n+1}$ for each orbit. Therefore,
\[
    \chi_{\text{avg}}^2(\ell) = \sum_x \pi(x) \chi_x^2(\ell) = \frac{1}{n+1} \sum_{i=0}^n \chi_{\mc{O}_i}^2(\ell),
\]
where $\chi_{\mc{O}_i}^2(\ell)$ is the chi-square distance after $\ell$ steps when started from any state in the orbit $\mc{O}_i$. So to prove that $\chi_{\mc{O}_i}^2(\ell) \to 0$ for all $i$, it suffices to prove that $(n+1)\chi_{\text{avg}}^2(\ell) \to 0$. But this is actually already implied by the calculation above, which in fact shows faster-than-polynomial decay of $\chi_{\text{avg}}^2(\ell)$ in $n$ in all zones when $\ell = (1 + \varepsilon) \frac{\log 2}{2} \frac{n}{\log n}$. This concludes the proof.
\end{proof}

\begin{remark}
The delicate choice of $c$ for zone $2$ was developed as part of an argument to prove a limiting ``shape theorem'' for $\chi_{\text{avg}}^2(\ell)$. Perhaps the calculations for different zones may be merged by directly using $\ell = (1 + \varepsilon) \frac{\log 2}{2} \frac{n}{\log n}$; however, we leave the argument in its current form in case a reader wants to work more and prove a shape theorem.
\end{remark}

It remains now to show parts (3) and (4) of the theorem; that is, most states indeed take order $\frac{n}{\log n}$ steps to reach stationarity in $\ell^2$. For additional context, recall from \cref{l2byl1example} that states with all but a constant number of $0$s or $1$s have $\ell^2$ mixing in just $\log n$ steps, and so it is natural to ask whether we can also exhibit explicit states with slow $\ell^2$ mixing. The following argument, using an idea of Lucas Teyssier, shows that states with a positive fraction of both $0$s and $1$s do in fact require $\frac{n}{\log n}$ steps to converge.

\begin{proof}[Proof of \cref{chisquare}, part (3)]
We may bound the chi-square distance starting at $x^{(n)}$ using just the term for $x^{(n)}$ itself:
\begin{align*}
    \chi_{x^{(n)}}^2(\ell) = \sum_{y \in C_2^n} \left|\frac{K^\ell(x^{(n)}, y)}{\pi(y)} - 1\right|^2 \pi(y) &\ge \left|\frac{K^\ell(x^{(n)}, x^{(n)})}{\pi(x^{(n)})} - 1\right|^2\pi(x^{(n)}) \\
    &\ge \left|\frac{K\left(x^{(n)}, x^{(n)}\right)^\ell}{\pi(x^{(n)})} - 1\right|^2 \pi(x^{(n)}).
\end{align*}
Plugging into the formula for the transition probability in \cref{closedformk}, we have that for any state $x^{(n)}$ with $|x^{(n)}| = a$ zeros and $n-a$ ones,
\[
    K(x^{(n)}, x^{(n)}) = \frac{\binom{2a}{a}\binom{2(n-a)}{n-a}}{4^n \binom{a}{a} \binom{n-a}{n-a}} \ge \exp\left(-\frac{1}{8a} - \frac{1}{8(n-a)}\right) \frac{1}{\sqrt{\pi a}} \cdot \frac{1}{\sqrt{\pi (n-a)}},
\]
using the central binomial coefficient bound in \cref{stanicabound}. Since $1 \le a \le n-1$ by our assumption on $x^{(n)}$, we thus have
\[
    K(x^{(n)}, x^{(n)}) \ge \exp\left(-\frac{1}{8} - \frac{1}{8}\right) \frac{1}{\sqrt{\pi n/2}} \cdot \frac{1}{\sqrt{\pi n/2}} > \frac{1}{en}.
\]
But because $cn \le |x^{(n)}| \le (1-c)n$ by assumption, $\frac{1}{\pi(x^{(n)})} = (n+1) \binom{n}{|x^{(n)}|}$ grows exponentially in $n$ (with constant depending on $c$), so it takes many steps for $K\left(x^{(n)}, x^{(n)}\right)^\ell$ to get small enough to be of comparable order to $\pi(x^{(n)})$. More precisely, if $\ell = \Theta_c\left(\frac{n}{\log n}\right)$, then $K(x^{(n)}, x^{(n)})^\ell > \pi(x^{(n)})^{1/3}$ and $\chi_{x^{(n)}}^2(\ell)$ is exponentially growing in $n$, as desired.
\end{proof}

We may keep more careful track of the constants in the argument above to get sharp bounds for the half-zeros, half-ones state, yielding the final part of the theorem:

\begin{proof}[Proof of \cref{chisquare}, part (4)]
Consider the specific states $y^{(n)} \in C_2^n$ with $\lfloor \frac{n}{2} \rfloor$ zeros followed by $\lceil \frac{n}{2} \rceil$ ones. Following the calculations in the proof of part (3), 
\[
    \chi_{y^{(n)}}^2(\ell) \ge \left|\frac{K\left(y^{(n)}, y^{(n)}\right)^\ell}{\pi(y^{(n)})} - 1\right|^2 \pi(y^{(n)}),
\]
now with the explicit bounds 
\[
    K(y^{(n)}, y^{(n)}) \ge e^{-1/4} \frac{1}{\sqrt{\pi \lfloor n/2 \rfloor}} \frac{1}{\sqrt{\pi \lceil n/2 \rceil}} > \frac{1}{en}\quad \text{and}
\]
\[
    \pi(y^{(n)}) = \frac{1}{(n+1) \binom{n}{\lfloor n/2 \rfloor}} < \frac{n}{2^n}.
\]
So for any $\varepsilon > 0$, if $\ell \le \left(\frac{\log 2}{2} - \varepsilon\right) \frac{n}{\log n}$, then $K\left(y^{(n)}, y^{(n)}\right)^\ell \ge 2^{-n/2} e^{2\varepsilon n} 2^{-n/(2 \log n)}$. Plugging in these bounds (and also using that $\pi(y^{(n)}) \ge \frac{1}{(n+1) 2^n}$ to lower bound the last factor), we have for these $\ell$s that
\[
    \chi_{y^{(n)}}^2(\ell) \ge \left|\frac{2^n \cdot 2^{-n/2} e^{2\varepsilon n} 2^{-n/(2 \log n)}}{n}  - 1\right|^2 \cdot \frac{1}{(n+1) 2^n}.
\]
This right-hand side goes to infinity as $n \to \infty$, proving the desired claim.
\end{proof}

\section{Related chains}\label{relatedchainssection}

\subsection{Other Markov chains with similar properties}\label{othermarkovchains}

The binary Burnside process in this paper is a Markov chain on the hypercube which lumps to the orbits $\mc{O}_i$. Here, we mention some other chains that share this property and highlight some differences in their rates of convergence.

First of all, consider the \textit{nearest-neighbor random walk on the hypercube}. The lumped chain in that setting is the \textit{Ehrenfest urn}, in which there are two urns and a uniform ball is moved from one urn to the other at each step. While both chains can be lumped to the same orbits $\mc{O}_i$ and also both satisfy $K(x, y) = K(\sigma(x), \sigma(y))$, this lumped chain behaves quite differently from the lumped binary Burnside process. Specifically (see \cite{cutoff} for more details and references to proofs, as well as \cite{kharezhou} for some generalizations), $\frac{1}{4} n \log n + cn$ steps are necessary and sufficient for convergence in both $\ell^1$ and $\ell^2$ when started from $0$, while only $cn$ steps are necessary and sufficient when started from $\frac{n}{2}$. In contrast, explicit computations using the discrete Chebyshev polynomials show that for the lumped binary Burnside chain, a constant number of steps are necessary and sufficient both when started from $0$ and from $\frac{n}{2}$.

Continuing this comparison, we may also compare behavior of the two unlumped chains on $C_2^n$. Started from any vertex, the nearest-neighbor random walk with holding converges to stationarity in $\frac{1}{4}n \log n + cn$ steps (the exact profile is computed in \cite{nearestneighbor}), and the choice of starting state does not matter since we have a random walk on a group. In contrast, our main results show that the starting state drastically affects rates of convergence for the binary Burnside chain: \cref{chisquare} shows that most states take order $\frac{n}{\log n}$ steps to converge, while \cref{diaconiszhongbound} shows that the all-zeros state takes just a constant number of steps.

\medskip

For a second example, consider the \textit{uniform-prior beta-binomial chain}, first studied in \cite{twogibbs} as an example of a two-component Gibbs sampler. Briefly, this chain may be described as follows. Consider $(j, \theta)$ sampled from the joint distribution $f(j, \theta) d\theta = \binom{n}{j} \theta^j (1 - \theta)^{n-j} d\theta$, where $j \in \{0, 1, \cdots, n\}$ and $d\theta$ is Lebesgue measure on $[0, 1]$. (This is indeed a probability measure, since summing over $j$ yields $1$ for all $\theta$ and then integrating over $\theta$ yields $1$ overall.) We may form a Markov chain on the $j$-state space $\{0, 1, \cdots, n\}$ as follows:
\begin{itemize}
    \item From $j$, sample $\theta$ from the distribution conditioned on $j$ (which is the beta distribution with parameters $(j+1, n-j+1)$).
    \item From $\theta$, sample $j'$ from the distribution conditioned on $\theta$ (which is binomial with parameters $(n, \theta)$).
\end{itemize}
Much like one step of the binary Burnside chain consists of performing the steps $x \mapsto s \mapsto y$, one step of this chain consists of performing the steps $j \mapsto \theta \mapsto j'$. The resulting chain has a uniform stationary distribution, and \cite[Proposition 1.1]{twogibbs} shows that it also has the discrete Chebyshev polynomials as eigenvectors (just like our lumped chain, as shown in \cref{lumpedeigenvalues}). However, the eigenvalues in the beta-binomial chain do have an explicit dependence on $n$ (unlike in our problem), and the chain requires order $n$ steps to converge in chi-square distance when started from either $0$ or $n$.

We may unlump this chain to get another Markov chain on $C_2^n$ in a straightforward way:
\begin{itemize}
    \item From $x \in C_2^n$, sample $\theta$ from the beta distribution with parameters $(|x|+1, n-|x|+1)$.
    \item From $\theta$, sample $x'$ as a sequence of $n$ Bernoulli($\theta$) random variables, viewed as a binary $n$-tuple.
\end{itemize}

This chain thus also has the same constant-on-orbits stationary distribution $\pi(x) = \frac{1}{(n+1) \binom{n}{|x|}}$ as our binary Burnside process. But in this case, ``unlumping'' the chain does not lead to higher eigenvalue multiplicities or longer mixing times. Indeed, since $\theta$ depends only on the orbit of $x$, we have (letting $\tilde{K}^{\text{lumped}}$ and $\tilde{K}^{\text{unlumped}}$ denote the transition matrices of the lumped and unlumped chains, respectively)
\[
    \tilde{K}^{\text{unlumped}}_n(x, x') = \frac{1}{\binom{n}{|x'|}}\tilde{K}^{\text{lumped}}_n(|x|, |x'|),
\]
which implies that the nonzero eigenvalues and multiplicities of the unlumped chain are identical to those of the lumped chain -- all additional eigenvalues are zero. Also, the symmetry of the binary Burnside chain described in \cref{restricttocoordinates} does not hold for the unlumped beta-binomial chain. In summary, despite the identical stationary distributions and eigenvectors coming from orthogonal polynomials, these two Markov chains behave quite differently.

\subsection{Generalizing beyond the binary case}\label{beyondbinarysubsection}

This paper discusses the binary Burnside process, which is a Markov chain on the hypercube $C_2^n$. An analogous definition can also be made for a Burnside process on $(C_k^n, S_n)$ for $k \ge 2$, and we discuss how some symmetries of the binary case still persist and propose some ideas for extending our results.

In one step of this more general Burnside process, we begin with an $n$-tuple $x \in C_k^n$, uniformly pick a permutation permuting the coordinates within each value, write it as a product of disjoint cycles, and label each cycle uniformly with one of the $k$ values in the alphabet. \cref{restricttocoordinates} generalizes directly in this setting, with the only modification to the proof being that we partition the coordinate set into $k$ sets of locations rather than just the locations of zeros and ones:

\begin{proposition}\label{restricttocoordinates2}
The restriction of the Burnside process on $(C_k^n, S_n)$ to any $m \le n$ of its coordinates is also a Markov chain, and its transition probabilities are exactly given by the Burnside process on $(C_k^m, S_m)$.
\end{proposition}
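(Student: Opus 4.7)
The plan is to mirror the proof of \cref{restricttocoordinates} verbatim, with the only structural change being that each state $x \in C_k^n$ partitions the coordinate set $[n]$ into $k$ blocks rather than $2$. Write $L_a = \{i : x_i = a\}$ for $a \in \{0, 1, \ldots, k-1\}$, so that the stabilizer is $G_x = S_{L_0} \times S_{L_1} \times \cdots \times S_{L_{k-1}}$, and a uniform element of $G_x$ is given by choosing a uniform permutation on each block independently. Similarly, the cycle-labeling step of the Burnside process now assigns each cycle one of the $k$ alphabet values uniformly and independently.

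The key combinatorial fact to invoke is the same ``canonical cycle representation'' argument: write each permutation $\sigma \in S_N$ in cycle notation with the maximum element of each cycle placed first, and sort cycles by their maxima in increasing order. Erasing any collection of the smallest indices from this word preserves exactly which of the remaining indices lie in a common cycle, because a new cycle is recorded at each left-to-right maximum and removing smaller numbers does not change which of the retained numbers are maxima. Consequently, if $\sigma$ is uniform on $S_N$, then erasing indices $1, \ldots, t$ produces a uniform permutation on the remaining $N - t$ indices; applying this separately on each block $L_a$, a uniform element of $S_{L_0} \times \cdots \times S_{L_{k-1}}$ restricts to a uniform element of $S_{L_0 \cap R} \times \cdots \times S_{L_{k-1} \cap R}$ for any retained coordinate subset $R$.

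From here the proof assembles itself. Fix a target subset $R \subseteq [n]$ of size $m$, and order $[n]$ so that every element of $[n] \setminus R$ precedes every element of $R$; use this ordering when writing cycles canonically. One step of the Burnside process on $C_k^n$, restricted to the coordinates in $R$, then consists of (i) a uniform permutation in $\prod_a S_{L_a \cap R}$, obtained by restricting the uniform permutation in $\prod_a S_{L_a}$, and (ii) an independent uniform $k$-labeling of each cycle, which remains well-defined on the restriction because ``being in the same cycle'' is preserved by erasure. The transition law into the next configuration on $R$ therefore depends only on $x|_R$, which verifies Dynkin's criterion for lumping, and moreover matches precisely the Burnside dynamics on $(C_k^m, S_m)$.

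I do not anticipate a genuine obstacle here: every ingredient is a clean generalization of the binary case, since the cycle-erasure lemma is a statement purely about $S_n$ (independent of the alphabet), and the $k$-way independent labeling of cycles respects the restriction to $R$ in the same way that $0/1$ labeling does. The only mild care needed is notational bookkeeping to track the $k$-part partition $L_0 \sqcup \cdots \sqcup L_{k-1}$ and to confirm that Dynkin's criterion is applied with the orbits on $C_k^m$ indexed by the full composition $(|L_0 \cap R|, \ldots, |L_{k-1} \cap R|)$ rather than by a single count.
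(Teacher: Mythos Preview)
Your proposal is correct and matches the paper's approach exactly: the paper does not give a separate proof of \cref{restricttocoordinates2} but simply remarks that \cref{restricttocoordinates} generalizes directly, ``with the only modification to the proof being that we partition the coordinate set into $k$ sets of locations rather than just the locations of zeros and ones,'' which is precisely what you do. Your only slight slip is in the final parenthetical, where you speak of ``orbits on $C_k^m$ indexed by the full composition''; the lumped states here are the full restrictions $x|_R \in C_k^m$, not $S_m$-orbits, but your main argument already has this right.
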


Note however that the methods used to prove \cref{eigenvectors} in the $k = 2$ case from \cref{restricttocoordinates} run into significant challenges for $k > 2$. Instead of considering a lumped chain on $\{0, 1, \cdots, n\}$, we must now consider the Bose-Einstein orbit chain of \cite{boseeinstein} mentioned in \cref{propertiessection}. This Markov chain generally has irrational eigenvalues (even for small values like $k = 3, n = 6$), making explicit descriptions of the eigenvectors (as we had with the discrete Chebyshev polynomials) difficult. Additionally, the unlumped Burnside chain now exhibits eigenvalues not present in the lumped chain, meaning we cannot just ``lift'' lumped eigenvectors to unlumped ones and expect a full eigendecomposition. 

\begin{conjecture}\label{ckmultiplicityconjecture}
Fix $k$, and let $\lambda$ be any nonzero eigenvalue of the Burnside chain on $(C_k^n, S_n)$ for any $n$. Then $\lambda$ occurs with multiplicity $a_\lambda \binom{n}{b_\lambda}$ for some integers $a_\lambda, b_\lambda$.
\end{conjecture}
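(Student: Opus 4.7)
The plan is to adapt the Schur--Weyl duality approach of \cref{schurweylsection} to the $C_k^n$ setting, where the relevant decomposition described in \cref{beyondbinarysubsection} reads $L(C_k^n) = \bigoplus_{\mu} S^\mu \otimes L^\mu$ with the sum ranging over partitions $\mu$ of $n$ of at most $k$ parts. Because $K_n(x,y) = K_n(\sigma(x), \sigma(y))$ for all $\sigma \in S_n$ (a direct generalization of \cref{Kmatrixsymmetries}), Schur's lemma forces $K_n$ to act on each summand as $I_{S^\mu} \otimes \tilde K_n^\mu$ for some operator $\tilde K_n^\mu$ on $L^\mu$. Consequently each nonzero eigenvalue $\lambda$ of $K_n$ arises with multiplicity
\[
    \mathrm{mult}(\lambda) = \sum_\mu m_\mu(\lambda)\cdot\dim(S^\mu),
\]
where $m_\mu(\lambda)$ denotes the multiplicity of $\lambda$ in $\tilde K_n^\mu$.

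The next step would be to express $K_n$ as an element of the universal enveloping algebra $U(\mathfrak{gl}_k)$, generalizing \cref{pplusexpression}. If such a universal expression exists, then the eigenvalues of each $\tilde K_n^\mu$ would be encoded by formulas in the highest weight of $\mu$ rather than in $n$ directly. Identifying partitions of $n$ by their ``bottom parts'' $\nu = (\mu_2, \ldots, \mu_k)$ (with $\mu_1 = n - |\nu|$ determined by $n$), the set $\Sigma(\lambda)$ of $\nu$'s with $m_\mu(\lambda) > 0$ would be independent of $n$ once $n$ is large enough for $(n - |\nu|, \nu_2, \ldots, \nu_k)$ to be a valid partition. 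The multiplicity of $\lambda$ would then reduce to
\[
    \mathrm{mult}(\lambda) = \sum_{\nu \in \Sigma(\lambda)} m_\nu(\lambda) \cdot \dim S^{(n-|\nu|, \nu_2, \ldots, \nu_k)},
\]
where the weights $m_\nu(\lambda)$ are universal integers.

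For each fixed $\nu$, the hook length formula shows that $\dim S^{(n - |\nu|, \nu_2, \ldots, \nu_k)}$ is a polynomial in $n$ of degree $|\nu|$. The binary case analysis of \cref{eigenvaluesplittingresult} worked because, in that setting, $\dim S^{(n-m, m)} = \binom{n}{m} - \binom{n}{m-1}$ telescoped upon summation over $\Sigma(\beta_k)$ to yield $\binom{n}{2k}$. The goal for general $k$ would be to show that the weighted sum above collapses to an integer multiple of a single binomial $\binom{n}{b_\lambda}$, most likely with $b_\lambda = \max_{\nu \in \Sigma(\lambda)} |\nu|$, via a similar telescoping identity on two-column-difference dimensions.

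The main obstacle will be analyzing $\Sigma(\lambda)$ and the weights $m_\nu(\lambda)$ precisely enough to produce this telescoping. The appearance of irrational eigenvalues already at $(k, n) = (3, 6)$, noted in \cref{beyondbinarysubsection}, indicates that the spectrum of $\tilde K_n^\mu$ is genuinely more complicated than in the binary case, so $\Sigma(\lambda)$ likely groups several $\nu$'s together in Galois-conjugate families. A tractable first step would be to compute the universal $U(\mathfrak{gl}_k)$ expression for $K_n$ for small $k$, establishing the analog of \cref{pplusexpression} for $k = 3$, and then use the resulting formula to check the conjecture on several eigenvalues directly: this would both refine the form of the constants $a_\lambda, b_\lambda$ and suggest the combinatorial identity needed for a uniform proof.
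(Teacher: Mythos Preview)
This statement is a \emph{conjecture} in the paper, not a theorem: the paper offers no proof, only the numerical evidence for $k=3$ and the remark that a resolution ``may be'' obtained by generalizing \cref{eigenvaluesplittingresult}. Your proposal is likewise not a proof but a research plan, and you are candid about this. So there is no discrepancy to flag between your argument and the paper's---neither one establishes the claim.

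Your outline tracks the paper's own suggested route almost exactly: decompose $L(C_k^n)$ via Schur--Weyl duality, use Schur's lemma to reduce to the $\tilde K_n^\mu$ blocks, and hope that the resulting sum of Specht-module dimensions collapses to a single binomial coefficient. The paper points to the same obstacle you identify, namely that $\dim S^{(n-|\nu|,\nu_2,\ldots,\nu_k)}$ is a more complicated polynomial in $n$ than the two-row difference $\binom{n}{m}-\binom{n}{m-1}$, so the telescoping that worked for $k=2$ has no obvious analog.

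The genuine gaps in your plan are the ones you yourself name. First, the universal $U(\mathfrak{gl}_k)$ expression you want to invoke is itself the content of \cref{pplusexpression}, which is also only a conjecture (checked to $n=10$ for $k=2$); you would need to prove it, or at least prove enough of its consequences, before leaning on it. Second, even granting that the set $\Sigma(\lambda)$ stabilizes for large $n$, nothing in your outline explains \emph{why} the weighted sum $\sum_{\nu\in\Sigma(\lambda)} m_\nu(\lambda)\dim S^{(n-|\nu|,\nu)}$ should equal $a_\lambda\binom{n}{b_\lambda}$ rather than some other polynomial in $n$. In the $k=2$ case this collapse happened because the $m_\nu(\beta_k)$ were all equal to $1$ on a contiguous range of $m$, producing a telescoping sum; for $k\ge 3$ you have given no mechanism for the analogous cancellation, and the presence of irrational eigenvalues (hence Galois-conjugate families of $\nu$) makes any such mechanism nonobvious. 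Until that step is supplied, the argument remains a heuristic.
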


For the $k = 2$ case, \cref{eigenvectors} shows that for each eigenvalue $\lambda = \beta_m$, we have $a_\lambda = 1$ and $b_\lambda = 2m$, and no other eigenvalues besides the ones from the orbit chain appear. In contrast, consider the eigenvalue $\frac{1}{18}$ for $k = 3$. This eigenvalue does not appear in the orbit Bose-Einstein chain for any $n \le 8$, but it occurs with multiplicity $2, 10, 30, 70$ for $n = 4, 5, 6, 7$, suggesting that $a_\lambda = 2$ and $b_\lambda = 4$.

Even without eigenvalues and eigenvectors, some information about mixing time can still be proved. Aldous showed (as a generalization of \cref{aldousbound}) that 
\[
    ||K_x^\ell - \pi||_{\text{TV}} \le n\left(1 - \frac{1}{k}\right)^\ell,
\]
meaning that $k(\log n + c)$ steps are sufficient for $\ell^1$ mixing from any starting state. Our argument for $\ell^2$ mixing also generalizes to prove an analogous bound:

\begin{proposition}\label{slowmixingwithinorbits}
Let $x^{(n)} \in C_k^n$ be any sequence of states such that $x^{(n)}$ contains at least $cn$ coordinates of two different values in $C_k$ for some $c \in (0, 1)$. Then for the Burnside process on $(C_k^n, S_n)$, we have $\chi_{x^{(n)}}^2(\ell) \to \infty$ for $\ell = \Theta_{c,k}(\frac{n}{\log n})$ (meaning that the constant may now depend on both $c$ and $k$).
\end{proposition}
\begin{proof}
As in the proof of \cref{chisquare}, part (3) (near the end of \cref{chisquareboundsection}), we bound $\chi_{x^{(n)}}^2(\ell)$ using only the term $y = x^{(n)}$. For brevity, we will not establish an exact formula for $K(x^{(n)}, x^{(n)})$ in this more general case; instead, it suffices to show the lower bound $K(x^{(n)}, x^{(n)}) \ge \frac{1}{(nk)^k}$. Indeed, the probability that the permutation fixing $x^{(n)}$ is a union of full cycles within each value in $C_k$ is at least $\frac{1}{n^k}$ (because the probability that a uniformly random permutation in $S_m$ is a single cycle is $\frac{1}{m}$), and then the probability that each of those cycles are labeled with the value in $C_k$ that they came from is at least $\frac{1}{k^k}$. 

On the other hand, $\frac{1}{\pi(x^{(n)})}$ still grows exponentially in $n$, with constant depending on $c$ and $k$. Thus $\ell = \Theta_{c,k}\left(\frac{n}{\log n}\right)$ steps are required until $K\left(x^{(n)}, x^{(n)}\right)^\ell < \pi(x^{(n)})^{1/3}$, and so $\chi_{x^{(n)}}^2(\ell)$ is still exponentially growing as $n \to \infty$ for this value of $\ell$.
\end{proof}

So even in this more general case, $\frac{n}{\log n}$ steps are necessary for $\ell^2$ mixing from most starting states. However, without eigenvalues and eigenvectors, neither the eigenvalue bound of \cref{chisquare} nor the ``$\ell^2$ by $\ell^1$ upper bound'' of \cref{l2byl1corollary} is admissible for proving that this is also sufficient. Thus, it would be interesting to prove \cref{ckmultiplicityconjecture} and  provide matching upper bounds for mixing time. Along those lines, we conclude with a final unified conjecture for the Burnside process:

\begin{conjecture}
For any fixed $k \ge 2$, let $K_n$ denote the Burnside process on $(C_k^n, S_n)$. Then $K_n$ has cutoff in both $\ell^1$ and $\ell^2$ when started from states with a positive limiting proportion of at least two different values in $C_k$ (as in \cref{slowmixingwithinorbits}).
\end{conjecture}

\bibliographystyle{bibstyle}
\bibliography{biblio}

\end{document}